\documentclass[10pt,twoside,a4paper]{article}

\usepackage[utf8]{inputenc}

\usepackage[T1]{fontenc}

\usepackage{amsmath, bm}

\usepackage{amssymb}

\usepackage{microtype}

\usepackage[backend=biber,maxnames=10,backref=true,hyperref=true,safeinputenc]{biblatex}

\DefineBibliographyStrings{english}{%
	backrefpage = {cited on page},
	backrefpages = {cited on pages},
}

\DeclareFieldFormat[report]{title}{``#1''}
\DeclareFieldFormat[book]{title}{``#1''}
\AtEveryBibitem{\clearfield{url}}
\AtEveryBibitem{\clearfield{note}}

\usepackage{graphicx}
\graphicspath{{images/}}

\usepackage{lmodern}
\usepackage{mathtools}
\usepackage{dsfont,bbm}
\usepackage{rotating}
\usepackage{placeins}
\usepackage{float}
\usepackage{enumerate}
\usepackage{multirow}
\usepackage{booktabs}
\usepackage{chngcntr}

\usepackage{graphicx,tikz,pgfplots}
\usepackage{tikz-3dplot}
\usetikzlibrary{arrows,shapes,fadings,intersections,decorations.pathreplacing,positioning,patterns}
\usetikzlibrary{calc, arrows.meta}
\usetikzlibrary{shapes.geometric}
\graphicspath{{images/}}
\pgfplotsset{compat=newest}

\usetikzlibrary{quotes}
\usetikzlibrary{angles}
\usetikzlibrary{babel}
\usetikzlibrary{decorations.pathreplacing}
\usetikzlibrary{decorations.markings}
\usepackage{mathabx}

\usepackage[english]{babel}

\title{Raster Scan Diffraction Tomography} 

\author{Peter Elbau$^{1}$\\
	{\footnotesize\href{mailto:email}{peter.elbau@univie.ac.at}}
	\and
	Noemi Naujoks$^{1,3}$\\
	{\footnotesize\href{mailto:email}{noemi.naujoks@univie.ac.at}}
	\and Otmar Scherzer$^{1,2,3}$\\	
	{\footnotesize\href{mailto:email}{otmar.scherzer@univie.ac.at}}
}

\date{\today}

\usepackage[pdftex,colorlinks=true,linkcolor=blue,citecolor=green!50!black,urlcolor=blue,bookmarks=true,bookmarksnumbered=true,pdfusetitle]{hyperref}
\hypersetup
{
	pdfauthor={P. Elbau, N. Naujoks, O. Scherzer},
	pdftitle={Raster Scan Diffraction Tomography}, 
}

\usepackage[hyperref,amsmath,thmmarks]{ntheorem}
\usepackage{aliascnt}

\newaliascnt{proposition}{lemma}

\aliascntresetthe{proposition}

\newaliascnt{corollary}{lemma}
\newtheorem{corollary}[corollary]{Corollary}
\aliascntresetthe{corollary}

\newaliascnt{theorem}{lemma}
\newtheorem{theorem}[theorem]{Theorem}
\aliascntresetthe{theorem}

\theorembodyfont{\normalfont}
\newaliascnt{definition}{lemma}
\newtheorem{definition}[definition]{Definition}
\aliascntresetthe{definition}

\newaliascnt{assumption}{lemma}

\aliascntresetthe{assumption}

\newaliascnt{remark}{lemma}
\newtheorem{remark}[remark]{Remark}
\aliascntresetthe{remark}

\newaliascnt{example}{lemma}
\newtheorem{example}[example]{Example}
\aliascntresetthe{example}

\theoremstyle{nonumberplain}
\theoremseparator{:}
\theoremheaderfont{\normalfont\itshape}

\theoremsymbol{\ensuremath{\square}}
\newtheorem{proof}{Proof}

\usepackage[a4paper,centering,bindingoffset=0cm,marginpar=2cm,margin=2.5cm]{geometry}

\usepackage[pagestyles]{titlesec}
\titleformat{\section}[block]{\large\sc\filcenter}{\thesection.}{0.5ex}{}[]
\titleformat{\subsection}[runin]{\bf}{\thesubsection.}{0.5ex}{}[.]

\newpagestyle{headers}

{
	
	\headrule
	
	\sethead[\footnotesize\thepage][\footnotesize\sc  P. Elbau, N. Naujoks, O. Scherzer][]{}{\footnotesize\sc Raster Scan Diffraction Tomography}{\footnotesize\thepage}
	
	\setfoot{}{}{}
	
}

\usepackage[font=footnotesize,format=plain,labelfont=sc,textfont=sl,width=0.75\textwidth,labelsep=period]{caption}
\usepackage[labelfont = normalfont]{subcaption}

\usepackage{dsfont}
\usepackage{bbm}
\newcommand{\N}{\mathds{N}}

\newcommand{\R}{\mathds{R}}
\newcommand{\C}{\mathds{C}}
\newcommand{\Sp}{\mathbb{S}}
\newcommand{\usc}{u^{\mathrm{sca}}}
\newcommand{\ui}{u^{\mathrm{inc}}}
\newcommand{\ut}{u^{\mathrm{tot}}}

\definecolor{myorange}{RGB}{246,121,48}
\definecolor{flame}{rgb}{0.75,0.21,0.17}
\definecolor{mygreen}{RGB}{46,139,87}

\let\RE\Re
\let\Re=\undefined
\DeclareMathOperator{\Re}{\RE e}
\let\IM\Im
\let\Im=\undefined
\DeclareMathOperator{\Im}{\IM m}
\DeclareMathOperator{\supp}{supp}

\newcommand{\norm}[1]{\left\|#1\right\|}
\newcommand{\set}[1]{\left\{#1\right\}}
\newcommand{\inner}[2]{\left<#1,#2\right>}

\begin{document}
\def\sectionautorefname{Section}
\def\subsectionautorefname{Section}
	
	\maketitle
	\thispagestyle{empty}
	\begin{center}
		\hspace*{5em}
		\parbox[t]{12em}{\footnotesize
			\hspace*{-1ex}$^1$Faculty of Mathematics\\
			University of Vienna\\
			Oskar-Morgenstern-Platz 1\\
			A-1090 Vienna, Austria}
		\hfil
		\parbox[t]{17em}{\footnotesize
			\hspace*{-1ex}$^2$Johann Radon Institute for Computational\\
			\hspace*{1em}and Applied Mathematics (RICAM)\\
			Altenbergerstraße 69\\
			A-4040 Linz, Austria}\\
		\vspace*{0.5cm}
		\hspace*{5em}
		\parbox[t]{18em}{\footnotesize
			\hspace*{-1ex}$^3$Christian Doppler Laboratory for Mathematical\\
			\hspace*{1em}Modelling and Simulation of Next Generation\\
			\hspace*{1em}Medical Ultrasound Devices (MaMSi)\\
			Oskar-Morgenstern-Platz 1\\
			A-1090 Vienna, Austria}
	\end{center}

	\begin{abstract}
	Diffraction tomography is a widely used inverse scattering technique for quantitative imaging of weakly scattering media. In its conventional formulation, diffraction tomography assumes monochromatic plane wave illumination. This assumption, however, represents a simplification that often fails to reflect practical imaging systems such as medical ultrasound, where focused beams are used to scan a region of interest of the human body. Such measurement setups, combining focused illumination with scanning, have not yet been incorporated into the diffraction tomography framework. To bridge this gap, we extend diffraction tomography by modeling incident fields as Herglotz waves, thereby incorporating focused beams into the theory. Within this setting, we derive a new Fourier diffraction relation, which forms the basis for quantitative tomographic reconstruction from scanning data. Using this result, we systematically analyze how different scan geometries influence the reconstruction.
	\end{abstract}

	\section{Introduction}\label{sec:intro}
Ultrasound imaging is one of the most widely used medical imaging modalities, providing non-invasive, real-time visualization of internal body structures. It plays a crucial role in diagnostics, guiding treatments and monitoring patient health. Its safety, accessibility, and cost-effectiveness make it an essential tool in modern healthcare.

In conventional ultrasound imaging, a series of focused acoustic beams is transmitted into tissue, and the returning echoes are received and processed to form an image \cite{Hos19}. While highly effective for visualizing anatomical structures, this method yields primarily qualitative information about tissue structure, rather than quantitative data about its physical properties. Quantitative ultrasound imaging, in contrast, seeks to reconstruct tissue acoustic parameters from the measured signals. This approach allows for objective tissue characterization and enhances the ability to distinguish between healthy and pathological regions, making it highly relevant for clinical applications.

The pursuit of quantitative imaging has long motivated research in ultrasound, dating back to the development of ultrasound computed tomography in the 1970s \cite{Gre77}. Within this context, diffraction tomography has become a well-established approach \cite{Nat15, NorLin79}. Here, a weakly scattering object is illuminated by an incident wave, and information about its internal structure is inferred from measurements of the scattered field. A major advantage of diffraction tomography is its computational efficiency. By employing the first-order Born (or Rytov) approximation, the non-linear inverse scattering problem is simplified to a linear one. The resulting Fourier diffraction theorem \cite{Wol69} establishes a direct link between the Fourier transform of the measured data and the object’s scattering potential, enabling explicit reconstruction via Fourier inversion, commonly referred to as \emph{filtered backpropagation} \cite{Dev82}.

However, classical diffraction tomography relies on idealized assumptions regarding both the incident field and the measurement geometry. Here, the object is illuminated by monochromatic plane waves arriving from a broad range of directions \cite{KakSla01, Nat15, Wol69}. While this approach works well for certain imaging scenarios where the object can be surrounded by transmitters and receivers, this setting does not reflect clinical practice in ultrasound imaging, where beams are typically emitted only from one side of the body, and, crucially, these beams are focused. Focusing concentrates acoustic energy at the focal region and thereby enables the visualization of deeper tissue layers with sufficient resolution \cite{Kut91}. This stands in sharp contrast to the plane-wave, full-angle illumination assumed in the classical framework. Recent work \cite{KirNauSchYan24} extended the classical diffraction tomography framework to account for focused beams by modeling them as superpositions of plane waves while still assuming full-angle data.

In this work, we go one step further by considering a series of focused beams that are actively scanned across the region of interest. Moreover, recent advances in hardware development, such as multi-aperture \cite{HalHooSamSchwLop24} and flexible transducer \cite{NeePetVerPeeHaa24}, are expanding what is experimentally possible. These devices could, in principle, emit, scan, and record signals from a much broader range of angles and focal positions than conventional probes. To fully exploit these capabilities and to bridge the gap between classical diffraction tomography and practical ultrasound systems, a theoretical framework is required that can accommodate arbitrary scanning geometries.

Therefore, we extend the diffraction tomography framework to a general scanning setup, see \autoref{fig:scangeo}, which combines focused illumination with active scanning.  We consider an object in a general Euclidean space~$\R^d$, $d\ge2$, probed by a focused beam propagating in a prescribed direction $\omega\in \Sp^{d-1}\coloneqq\{{x}\in\R^d \mid \norm{{x}} = 1\}$. The incident beam is hereby shifted such that the focal point moves on a predefined hyperplane orthogonal to a chosen direction $\nu\in\Sp^{d-1}$. For each position of the focal point, the interaction with the object generates a scattered wave field, which is subsequently measured at every point on the plane $\R^{d-1}\times\{L\}$.
We call this setup \emph{Raster Scan Diffraction Tomography}.

The parameters $\omega$ and $\nu$ in our formulation can be independently chosen, allowing for a wide range of scanning arrangements from standard linear probes to more exotic configurations. We derive a new Fourier diffraction theorem for such a scan geometry, and analyze which Fourier coefficients of the scattering potential of the object are accessible from the measurements. By systematically characterizing how different choices of beam direction $\omega$ and scan normal $\nu$ influence the recoverable parts of the object’s Fourier spectrum, we provide new insights into how the scanning geometry governs the object information and which scanning geometries are optimal for practical quantitative reconstructions.

\subsection*{Outline}
In \autoref{sec:forward}, we introduce the mathematical model describing focused beam scanning and its resulting measurement process. Based on this formulation, we derive a new Fourier diffraction theorem that applies to this scanning measurements. In \autoref{sec:Fourier_recon}, we discuss the reconstruction of the scattering potential via Fourier inversion. Finally, \autoref{sec:2Dcoverages} illustrates the Fourier space coverages for different scan geometries in two dimensions.

\begin{figure}[t]
	\centering
		\begin{subfigure}{0.3\textwidth}
		\vspace*{2pt}
		\includegraphics[scale = 0.78]{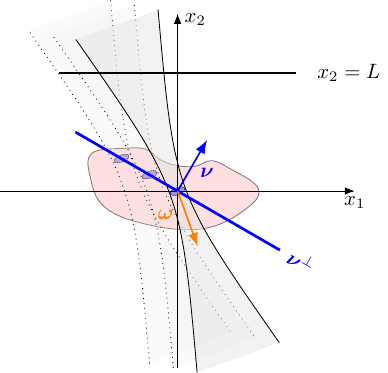}
		\caption{General scan geometry}
		\label{subfig:genset}
	\end{subfigure}\hfill
	\begin{subfigure}{0.3\textwidth}
		\includegraphics[scale = 0.78]{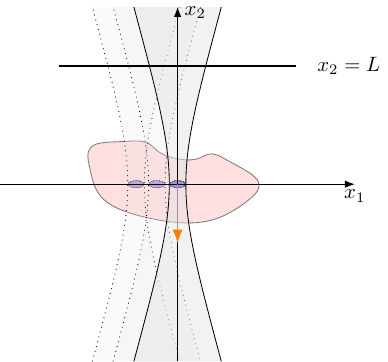}
		\caption{Reflection imaging with $\nu=\omega=-e_2$}
		\label{subfig:RI}
	\end{subfigure}\hfill
	\begin{subfigure}{0.3\textwidth}
		\includegraphics[scale = 0.78]{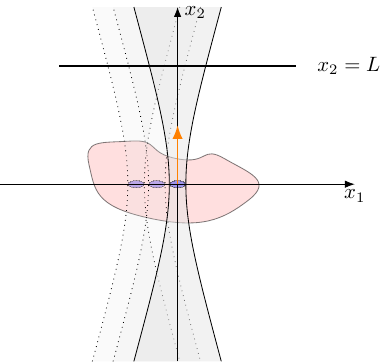}
		\caption{Transmission imaging with $\nu=\omega=e_2$}
		\label{subfig:TI}
	\end{subfigure}
	\caption{Schematic overview of the general scan geometry in $d=2$. (a) A focused beam propagates in the direction $\omega\in\mathbb{S}^{1}$ and scans an object by shifting the focal point along the line $\nu^\perp$ orthogonal to the direction $\nu\in\mathbb{S}^1$. The resulting scattered waves are measured on a receiver line $x_2 = L$ outside the object. (b) Reflection imaging with the beam pointing downward,  $\omega=\nu=-e_2$. (c) Transmission imaging with the beam pointing upward, $\omega=\nu=-e_2$. }
	\label{fig:scangeo}
\end{figure}

	\section{The forward model}\label{sec:forward}
We first establish the mathematical model of wave propagation and scattering that will serve as the basis for our analysis. Let $d\in\N \setminus \{1\}$ denote the space dimension, where the main practical interest is, of course, the case $d=3$. We assume that the object to be reconstructed is contained inside a ball 
\[
\mathcal B^d_{r} \coloneqq \{{x}\in\R^d\mid\norm{{x}}<r\}
\]
of some radius $r>0$ where $\norm{\cdot}$ denotes the Euclidean norm. The object is characterized by the spatially varying acoustic refractive index $n \colon \R^d \to \C$, which satisfies $n( x) = 1$ for $ x \in \R^d\setminus\mathcal B_{r}^d$.

Let $\ui\colon \R^d \to \C$ represent the given incident field, which we assume to be a solution of the Helmholtz equation
\begin{equation}\label{eq:Helm}
	\Delta \ui(x) + k_0^2\ui(x) = 0, \qquad x\in\R^d, 
\end{equation}
where $k_0>0$ is the wave number. Following \cite[section 8.2]{ColKre19} the total field $\ut = \ui+\usc$ satisfies
\[
\Delta \ut(x) +k_0^2n(x)\ut(x) = 0, \qquad \ x\in\R^d,
\]
where the scattering part $\usc$ satisfies the Sommerfeld radiation condition \cite{ColKre19,Som12}
\[
\lim\limits_{\norm{{x}}\to \infty}\norm{{x}}^{\frac{d-1}{2}}\left(\frac 1 {\norm{ x}}  \inner{x}{ \nabla \usc(x)} -ik_0\usc(x)\right) = 0,
\]
ensuring that only outgoing waves are considered.

\subsection{First-order scattering model}
We introduce the scattering potential
\[
f({x})=k_0^2[n({x})^2-1], \qquad {x}\in\R^d,
\]
whose support is by construction contained in $\overline{\mathcal B^d_{r}}\subseteq[-r,r]^d$. In diffraction tomography, it is common to assume a weakly scattering object, meaning that $f$ is sufficiently small such that the scattered field can be approximated by the single-scattered field, that is the first Born approximation \cite{Wol69}
\begin{equation}\label{eq:usc}
	\usc({x}) \approx u({x}) \coloneqq \int_{\R^d}G({x}- x')f({x}')\ui({x}')d x', \quad {x}\in\R^d,
\end{equation}
where 
\[
G({x})\coloneqq \frac{i}{4}\left(\frac{k_0}{2\pi\norm{{x}}}\right)^{\frac{d-2}{2}} H_{\frac{d-2}{2}}^{(1)}\left(k_0\norm{{x}}\right)
\]
is the outgoing fundamental solution to the Helmholtz equation in $\R^d$, see \cite{Agm90}. Here, $H_\rho^{(1)}$ denotes the Hankel function of the first kind of order $\rho$. Specifically, in two and three dimensions, we have that
\[
G(x) =
\begin{cases}
	\displaystyle \frac{i}{4}\, H_0^{(1)}\left( k_0 \norm{x}\right) , & d=2, \\[2mm]
	\displaystyle \frac{e^{i k_0  \norm{x}}}{4 \pi \norm{x}}, & d=3.
\end{cases}
\]

The dependence of $\usc$ on the scattering potential $f$ is non-linear. The non-linear inverse problem of recovering $f$ from measurements of $\usc$ has been studied extensively in the literature, see, for example,  \cite{CakColHad16, ColKir96, ColKirPai89,ColMon88,GutKli93,Hoh01}; a comprehensive overview is provided in \cite[Section 11]{ColKre19}. A conceptually different, data-driven approach tailored to reflection imaging setups was proposed in \cite{BorDruMamZas18, BorDruMamZasZim20, BorGarMamZim22}.

However, the map from $f$ to the single-scattered field $u$ is linear so that it simplifies the inversion considerably if we assume to have measurement data for $u$.

\subsection{Herglotz wave illumination}
We model an incident beam satisfying \autoref{eq:Helm} as a superposition of plane waves propagating in different directions. Specifically, let 
\[
\Sp^{d-1}_{k_0} \coloneqq \left\lbrace x \in \R^d \mid \norm{x} = k_0\right\rbrace 
\] 
denote the sphere of radius $k_0$ in $\R^d$. We write $L^2(\Sp_{k_0}^{d-1})$ for the Lebesgue space of all square-integrable functions on this sphere. The incident beam is then represented by a Herglotz wave function \cite[Definition 3.26]{ColKre19} of the form
\begin{equation}\label{eq:ui}
	\ui(x) = \int_{\Sp^{d-1}_{k_0}} a(s) \, e^{i \inner{x }{s}} \, dS(s),
\end{equation}
where the Herglotz density $a \in L^2(\Sp^{d-1}_{k_0})$ represents the amplitude of each individual plane wave. 

Our main result, stated in \autoref{thm:FDT}, applies to arbitrary Herglotz densities $a\in L^2(\Sp_{k_0}^{d-1})$ without further assumptions. However, in the setting of \autoref{subfig:genset} the beam propagates in a fixed direction $\omega\in \Sp^{d-1}$. To account for this, we restrict the Herglotz density to the corresponding half-sphere. More precisely, we impose 
\begin{equation}\label{eq:halfsupport}
	a(s) = 0 \quad \text{whenever } \inner{s}{\omega} \leq 0.
\end{equation}

The standard example we have in mind is a Gaussian beam.

\begin{example}[Gaussian beam]\label{ex:beam}
	A focused beam propagating in direction $\omega\in \Sp^{d-1}$ can be obtained by prescribing the Herglotz density as a Gaussian function. Specifically, for $s\in \Sp^{d-1}_{k_0}$, we set
	\begin{equation}\label{eq:gaussian}
		a(s) \coloneqq \begin{cases}
		e^{-A\norm{\tilde{s}}^2}, \quad &\inner{s}{\omega} >0,
		\\ 0, & \inner{s}{\omega}\leq 0, 
		\end{cases}
	\end{equation}
	where $\tilde{s} \coloneqq s-\inner{s}{\omega}\omega$ is the component of $s$ orthogonal to $\omega$ and the parameter $A>0$ controls the beam waist, determining how strongly the beam is focused.
	A beam described by \autoref{eq:ui} with the density from \autoref{eq:gaussian} has its focal point located in the origin and is consistent with the modeling in \cite{AgrPat79}.
\end{example}

Inserting \autoref{eq:ui} in \autoref{eq:usc} and changing the order of integration, we obtain the scattered wave
\begin{align}\label{eq:u_hgw}
	u( x) = \int_{\Sp_{k_0}^{d-1}}a({s})w({x},{s})dS(s), 
\end{align}
where the function
\begin{equation}\label{eq:w}
		w({x}, s) = \int_{\R^{d}}G({x}-{x}')f({x}')e^{i\langle x',s\rangle}d{x}'
\end{equation}
represents the scattered wave induced by the incident plane wave $e^{i\langle x',s\rangle}$ propagating in direction $s\in\Sp^{d-1}_{k_0}$. Hence, the scattered field generated by a Herglotz wave is a superposition of the scattered waves $w$ from the different incident plane waves weighted by the Herglotz density $a\in L^2(\Sp^{d-1}_{k_0})$.

\subsection{Scanning measurements} 
The object is scanned by translating the incident beam along the hyperplane 
\[
\nu^\perp \coloneqq \left\lbrace y\in\R^d\mid \inner{y}{\nu} = 0 \right\rbrace 
\] 
which is oriented by the normal vector $\nu\in \Sp^{d-1}$. We refer to this hyperplane as the \emph{scan plane}. The scanning process is modeled by applying a translation to $\ui$, that is, we consider for every shift $y\in\nu^\perp$ an incident wave $\ui_y$ of the form
\begin{equation}\label{eq:ui_rigidm}
	\ui_{{y}}( x)\coloneqq\ui( x-{y})  = \int_{\Sp^{d-1}_{k_0}}a(s)e^{i\inner{x}{s}}e^{-i\inner{ y }{s}}dS({s}),\qquad  x\in\R^{d},\ y\in \nu^\perp.
\end{equation}
Herein, we assume that the orientation $\nu$ of the focal plane remains fixed during the experiment. The beam direction $\omega$ is encoded in the support of the density $a$ as specified in \autoref{eq:halfsupport} and is also fixed.

In this article, we distinguish between three \emph{scan regimes} depending on the relative orientation of the scan normal $\nu\in \Sp^{d-1}$ and the beam direction $\omega\in \Sp^{d-1}$; see also the geometric setup in \autoref{subfig:genset}.
\begin{enumerate}
	\item \textbf{Perpendicular scan:} in this configuration, the scan plane is orthogonal to the beam's propagation direction, i.e., $\nu = \omega$. This means that the focal point moves within this plane, always maintaining a fixed distance from the source along the beam axis.
	This setting reflects standard B-mode ultrasound, where lateral motion of the beam yields cross-sectional images at a constant depth.
	\item \textbf{Parallel scan:} One translation direction is along the beam direction, that is $\omega\in\nu^\perp$. This implies that the beam axis itself lies within the scan plane $\nu^\perp$. The focal point is still constrained to move within this plane, but this movement now has two components: a lateral shift (in $d-2$ dimensions) and a shift along the beam axis. This geometry therefore models a scan where the focal depth is actively varied, corresponding to a depth-wise scan of the object.

	\item \textbf{Tilted scan:} all intermediate cases, where neither perpendicular nor parallel alignment holds, i.e., $\nu\neq\omega$ and $\nu\not\perp\omega$. Physically, this corresponds to a scan where the focal point moves along an oblique plane relative to the beam axis. Such configurations may occur in practice when, for example, an ultrasound probe is slightly tilted relative to the imaging plane.
\end{enumerate}

Utilizing \autoref{eq:usc} with the incident wave $\ui_y$ and changing the order of integration, the corresponding scattered waves in Born approximation, which we will denote by $u_{{y}}$, are given by 
\begin{equation}\label{eq:uy}
	u_{{y}}({x}) = \int_{\R^d}G({x}-{x}')f({x}')\ui_{{y}}({x}')d{x}'=\int_{\Sp_{k_0}^{d-1}}a({s})w({x},{s})e^{-i\inner{y}{s}}dS(s), 
\end{equation}
where $w(x,s)$ is defined as in \autoref{eq:w}.
 
In our experiment, we assume that the scattered waves are detected at every point on a hyperplane $e_d^\perp+Le_d$ orthogonal to the $d$th standard basis vector $e_d$ at a distance $L>r$ from the origin, which ensures that it is outside the object. More precisely, we model the given scanning measurements as
\begin{equation}\label{eq:meas}
	m(x,y) \coloneqq u_{y}(x),\qquad x\in e_d^\perp+Le_d,\ y\in \nu^\perp.
\end{equation}
As our motivating application lies in acoustical imaging, we assume that these measurements are complex-valued, that is, both amplitude and phase information of the scattered waves are available. If only intensity (phaseless) data can be measured, additional phase retrieval techniques are required; see, for instance, \cite{AmmChoZou16, Nov15, KliRom16, HohNovSiv24}.

Depending on the relative orientation of the propagation direction $\omega\in \Sp^{d-1}$ of the beam and the measurement plane $e_d^\perp+Le_d$, different Fourier coefficients of the scattering potential influence the measurements, and we want to distinguish in this context three \emph{imaging regimes}.
	\begin{enumerate}
		\item \textbf{Reflection imaging:} the beam propagates away from the detector, i.e., $\omega= -e_d$. This corresponds to standard ultrasound imaging, where the recorded signal consists of reflected wave components; see \autoref{subfig:RI}.
		\item \textbf{Transmission imaging:} the beam propagates in the opposite direction, i.e., $\omega= e_d$, so that the detector captures only the transmitted wave components, see \autoref{subfig:TI}.
		\item \textbf{Oblique imaging:} the beam propagates in an arbitrary direction $\omega\in \Sp^{d-1}\setminus\{\pm e_d\}$ so that the detector can capture a mixture of transmitted and reflected components of the scattered waves.
	\end{enumerate}

\subsection{Fourier diffraction theorem for scanning measurements}\label{sec:FDT}
The Fourier diffraction theorem, originally formulated in \cite{Wol69}, forms the theoretical foundation for reconstructing the scattering potential $f$ from measurements of the scattered waves. It establishes a relation between the measured scattered wave from \autoref{eq:meas} and the spatial Fourier transform of the scattering potential. We now derive a version of the Fourier diffraction theorem adapted to the scanning geometry.

For this purpose, we will apply the Fourier transform to the measurement data $m$ with respect to the detection point $x$ on the measurement plane $e_d^\perp+Le_d$ and with respect to the shift $y$ of the incident plane along the scan plane $\nu^\perp$, which may in general be arbitrarily tilted relative to the measurement plane.

To be specific, we adopt (on the space $C_c^\infty(\R^d)$ of smooth functions with compact support where we have no problems with the existence of the integrals) the convention 
\begin{align*}
	\mathcal{F}_d\phi({k}) &=(2\pi)^{-\frac{d}{2}} \int_{\R^d}\phi({x})e^{-i\inner{k}{x}}d{x}, \qquad \phi \in C_c^\infty(\R^d), \\
	\mathcal{F}^{-1}_d\hat\phi({x}) &= (2\pi)^{-\frac{d}{2}} \int_{\R^d}\hat\phi({k})e^{i\inner{k}{x}}d{k}, \qquad \hat\phi \in C_c^\infty(\R^d),
\end{align*}
for the $d$-dimensional Fourier transform $\mathcal F_d\colon L^2(\R^d)\to L^2(\R^d)$ and its inverse $\mathcal F_d^{-1}$.

To take the Fourier transforms only along the $(d-1)$-dimensional planes $e_d^\perp+Le_d$ and $\nu^\perp$, we want to introduce for an arbitrary hyperplane $E\coloneqq v^\perp + t v\subseteq\R^d$, described by a unit normal vector $v\in\Sp^{d-1}$ and a shift $t\in\R$ along $v$, the Fourier transform $\mathcal F_E\colon L^2(E)\to L^2(v^\perp)$ and its inverse along $E$ via
\begin{equation}\label{eq:fftnu}
	\begin{aligned}
		\mathcal F_{E} \phi(\xi)
		&= (2\pi)^{-\frac{(d-1)}{2}}\int_{E} \phi(y)\, e^{- i\inner{\xi}{y}}  dS(y), \ \qquad \xi\in v^\perp,\ \phi\in C_c^\infty(E), \\
		\mathcal F^{-1}_{E}\hat\phi(y)
		&= (2\pi)^{-\frac{(d-1)}{2}}\int_{v^\perp}\hat\phi(\xi)\, e^{i\inner{\xi}{y}} dS(\xi), \qquad y\in E, \ \hat\phi \in C_c^\infty(v^\perp).
	\end{aligned}
\end{equation}

To verify the consistency of the definition, we choose a rotation $T$ with $T(\R^{d-1}\times\{0\}) = v^\perp$.
Every $y\in E$ can then be written uniquely as 
\[
y = T(\bar p,0) + t v \qquad\text{for some } \bar p\in\R^{d-1},
\]
and every frequency variable $\xi\in v^\perp$ can be parametrized by 
\[\xi = T(\bar q,0)\qquad\text{for some }\bar q\in\R^{d-1}.
\]
Since rotations and translations preserve the surface measure and since it follows from $\xi\in v^\perp$ that $\inner{y}{\xi}=\inner{\bar p}{\bar q}$, we have that
\begin{align*}
\mathcal F_E\phi(T(\bar q,0))&=(2\pi)^{-\frac{d-1}{2}}\int_{\R^{d-1}}\phi\big(T(\bar p,0)+t v\big)e^{-i\langle\bar q,\bar p\rangle}\,d\bar p, \\
\mathcal F_E^{-1}\hat\phi(T(\bar p,0)+tv)&=(2\pi)^{-\frac{d-1}{2}}\int_{\R^{d-1}}\hat\phi\big(T(\bar q,0)\big)e^{i\langle\bar q,\bar p\rangle}\,d\bar q.
\end{align*}
The inversion theorem for the $(d-1)$-dimensional Fourier transform thus shows us that $\mathcal F_E^{-1}$ is indeed the inverse of $\mathcal F_E$.

We want to define now for the measurements $m\colon(e_d^\perp+Le_d)\times\nu^\perp\to\C$ from \autoref{eq:meas} the Fourier transform with respect to both components as
\begin{equation}\label{eq:FTmeas}
\mathcal F_{e_d^\perp+Le_d,\nu^\perp}m(k,\xi) \coloneqq\mathcal F_{e_d^\perp+Le_d}\Big[x\mapsto \mathcal F_{\nu^\perp}^{-1}\big[ y \mapsto m(x,y)\big](\xi)\Big](k),
\end{equation}
where we chose for convenience for the second component the inverse Fourier transform. However, we remark that the function $m$ does not decay sufficiently fast at infinity to be square integrable. The Fourier transform in \autoref{eq:FTmeas} is thus to be interpreted as the Fourier transform of the regular tempered distribution given by $m$.

To express this Fourier transform, we define for every direction $v\in\Sp^{d-1}$ the half-space 
\begin{equation}\label{eq:H}
\Omega_{v}\coloneqq\left\lbrace {x}\in\R^d\mid \inner{x}{v} > 0 \right\rbrace 
\end{equation}
and introduce the hemisphere 
\begin{equation}\label{eq:Salpha}
	S_{v} \coloneqq \Sp^{d-1}_{k_0}\cap \Omega_{v}=\left\lbrace {x}\in\R^d \mid \norm{{x}}=k_0, \ \inner{x}{v} > 0 \right\rbrace, 
\end{equation}
which has radius $k_0$ and is oriented in the direction $v$. To parametrize this surface, we define the map
\begin{equation}\label{eq:para}
	{h}_{v}\colon \mathcal{B}_{k_0}^{d}\cap v^\perp\to S_{v}, \qquad  {h}_{v}(\xi) \coloneqq \xi +\kappa(\xi)v, 
\end{equation}
where $\kappa(\xi)\coloneqq \sqrt{k_0^2-\norm{\xi}^2}$. 

\begin{figure}
	\centering
	\begin{subfigure}{0.45\textwidth}
		\includegraphics{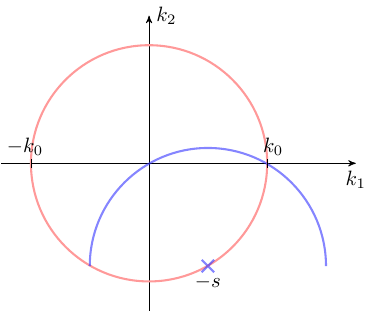}
		\caption{Semicircle parametrized by $k_1\mapsto h_{e_2}(k_1)-s$}
	\end{subfigure}\hfill
	\begin{subfigure}{0.45\textwidth}
		\includegraphics{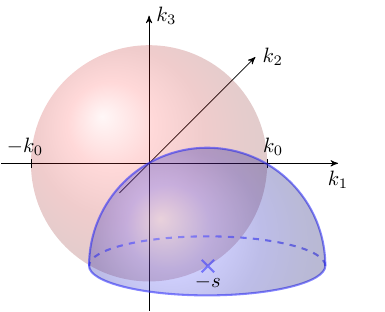}
		\caption{Hemisphere parametrized by $(k_1,k_2)\mapsto h_{e_3}(k_1,k_2)-s$}
	\end{subfigure}
	\caption{Illustration of the classical Fourier diffraction theorem stated in \autoref{eq:FDT}.  For plane-wave illumination with direction $s\in\mathbb{S}_{k_0}^{d-1}$, the Fourier transform of the scattering potential is accessible on a downward-oriented semicircle for $d=2$ or hemisphere for $d=3$, both of radius $k_0$ and centered at $-s\in\mathbb{S}_{k_0}^{d-1}$.} 
	\label{fig:hemisphere}
\end{figure}

\begin{theorem}[Fourier transform of the measurements]\label{thm:FDT}
	Let $d\in\N\setminus\{1\}$ and $f\in L^1(\R^d)$ be a function with $\supp(f)\subseteq \mathcal B^d_{r}$. Further, assume the incident wave is represented by \autoref{eq:ui_rigidm} with $a\in L^2(\Sp^{d-1}_{k_0})$. Choose an orientation $\nu \in \Sp^{d-1}$ of the scan hyperplane and a measurement distance $L>r$, see \autoref{subfig:genset}.
	
	Then, the Fourier transform of the measurement data $m$, as defined in \autoref{eq:FTmeas}, satisfies 
	\begin{align}\label{eq:PFDT}
		\mathcal F_{e_d^\perp+Le_d,\nu^\perp}m( k, \xi) =
		C({k},\xi)\Big(
		a\left({h}_{\nu}(\xi)\right)\mathcal F_d f\left( h_{e_d}({k})-{h}_{\nu}(\xi)\right)+ a\left({h}_{-\nu}(\xi)\right)\mathcal F_d f( h_{e_d}({k})-{h}_{-\nu}(\xi))\Big)
	\end{align}
	for $ k\in\mathcal B^{d}_{k_0}\cap e_d^\perp$ and $\xi\in\mathcal{B}^d_{k_0}\cap \nu^\perp$, where $h_{e_d}$ parametrizes the upper hemisphere and $h_{\pm\nu}$ parametrizes the hemisphere oriented along $\pm\nu$, see \autoref{eq:para}. The constant $C(k,\xi) $ is given by
	\begin{equation}\label{eq:const}
		C( k,\xi) \coloneqq (2\pi)^{\frac d2}\frac{ik_0e^{i\kappa({k})L}}{2\kappa({k})\kappa({\xi})}.
	\end{equation}
	Moreover, we get
	\begin{equation*}
		\mathcal F_{e_d^\perp+Le_d,\nu^\perp}m(k,\xi) = 0
	\end{equation*}
	for  $k\in\mathcal B^{d}_{k_0}\cap e_d^\perp$ and $\xi\in \nu^\perp\setminus\mathcal B^{d}_{k_0}$. 
\end{theorem}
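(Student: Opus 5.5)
The plan is to reduce everything to the classical Fourier diffraction theorem for a single plane wave and then use the Fourier transform in the scan variable $y$ to pick out the plane-wave components of the Herglotz density.

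\textbf{Step 1 (transform in $x$).} For fixed $s\in\Sp^{d-1}_{k_0}$, the function $w(\cdot,s)$ from \autoref{eq:w} is the Born field for the potential $f(x')e^{i\inner{x'}{s}}$. Its support lies in $\mathcal B_r^d$ and $L>r$. The standard computation (Weyl expansion of $G$ in the variable $x_d$ on the plane $x_d=L$) therefore gives the classical relation, valid for $k\in\mathcal B^d_{k_0}\cap e_d^\perp$:
\[
\mathcal F_{e_d^\perp+Le_d}\big[w(\cdot,s)\big](k)=\sqrt{\tfrac{\pi}{2}}\,\frac{i e^{i\kappa(k)L}}{\kappa(k)}\,\mathcal F_d f\big(h_{e_d}(k)-s\big).
\]
This is \autoref{eq:FDT} of the figure. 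I will either quote it or rederive it by writing $\mathcal F_{e_d^\perp}G(\cdot,x_d)=(2\pi)^{-(d-1)/2}\frac{i}{2\kappa}e^{i\kappa|x_d|}$, noting that $x_d-x_d'>0$ on the support.

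Since $a\in L^2\subseteq L^1$ on the sphere and the integrals are absolutely convergent in the distributional sense, I will use Fubini to exchange this transform with the $s$-integral in \autoref{eq:uy}. This yields
\[
\mathcal F_{e_d^\perp+Le_d}[u_y](k)=\int_{\Sp^{d-1}_{k_0}}a(s)\,g_k(s)\,e^{-i\inner{y}{s}}\,dS(s),
\]
where $g_k(s)$ is the right-hand side above. The function $g_k$ is continuous in $s$, because $f\in L^1$ makes $\mathcal F_df$ continuous.

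\textbf{Step 2 (transform in $y$).} Write $s=\xi'+t\nu$ with $\xi'\in\nu^\perp$. Then $e^{-i\inner{y}{s}}=e^{-i\inner{y}{\xi'}}$ for $y\in\nu^\perp$. Split the sphere into the hemispheres $S_{\pm\nu}$; the equator $\inner{s}{\nu}=0$ is a null set. On each hemisphere, parametrize $s=h_{\pm\nu}(\xi')$ with $\xi'\in\mathcal B^d_{k_0}\cap\nu^\perp$. The surface measure then becomes $dS(s)=\frac{k_0}{\kappa(\xi')}d\xi'$. This turns the $y$-dependence into
\[
\sum_\pm\int_{\nu^\perp}\mathbf 1_{\|\xi'\|<k_0}\,\frac{k_0}{\kappa(\xi')}\,(a g_k)\big(h_{\pm\nu}(\xi')\big)\,e^{-i\inner{y}{\xi'}}\,d\xi',
\]
which is $(2\pi)^{(d-1)/2}$ times the (forward) $\mathcal F_{\nu^\perp}$-type transform of an $L^1$ function of $\xi'$. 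Applying $\mathcal F^{-1}_{\nu^\perp}$ in the tempered-distribution sense is the reason for the inverse-transform convention in \autoref{eq:FTmeas}. By Fourier inversion, with the reflection $\xi'\mapsto-\xi'$ handled by the sign convention, this returns $(2\pi)^{(d-1)/2}\frac{k_0}{\kappa(\xi)}\sum_\pm (a g_k)(h_{\pm\nu}(\xi))$ for $\|\xi\|<k_0$, and $0$ for $\|\xi\|>k_0$. The boundary $\|\xi\|=k_0$ is a null set and can be ignored. The $0$ case gives the second claim of the theorem.

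\textbf{Step 3 (constants).} Collect the factors $(2\pi)^{(d-1)/2}\cdot\sqrt{\pi/2}\cdot\frac{i k_0 e^{i\kappa(k)L}}{\kappa(k)\kappa(\xi)}$ and check that they equal $C(k,\xi)$ in \autoref{eq:const}. This is where I must be careful with the $2\pi$ powers and with the sign in the exponent, since the latter decides whether $h_{\pm\nu}(\xi)$ or $h_{\pm\nu}(-\xi)$ appears.

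\textbf{Main obstacle.} The real difficulty is rigor rather than algebra. The quantities $m$ and $u_y$ are not $L^2$ in either variable, so the transforms must be taken distributionally. For the $x$-transform I plan to pair with test functions and exchange integrals: $G$ is locally integrable, the $x'$-support is compact, and $x_d=L$ lies away from it. For the $y$-transform I will identify $y\mapsto\mathcal F_{x}[u_y](k)$ as the Fourier transform of the $L^1$ density above, so that its distributional inverse transform is exactly that density.

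The one delicate point is the weight $1/\kappa(\xi')$, which is singular at $\|\xi'\|=k_0$. It is nevertheless integrable over the ball in dimension $d-1$, and $a\in L^2$ together with $g_k$ bounded keeps $a\,g_k/\kappa$ in $L^1$ by Cauchy--Schwarz against the measure $k_0\,d\xi'/\kappa$. This integrability is exactly what is needed for the inversion in Step 2 to be justified.
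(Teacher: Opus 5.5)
Your proposal is correct and follows the paper's proof essentially step by step: you apply the classical Fourier diffraction theorem to each $w(\cdot,s)$, split the sphere into $S_{\pm\nu}$ up to the null equator, substitute $s=h_{\pm\nu}(\xi)$ with surface element $k_0/\kappa(\xi)$, recognize the $y$-dependence as $(2\pi)^{\frac{d-1}{2}}\mathcal F_{\nu^\perp}$ of $\mathbf 1_{\{\|\xi\|<k_0\}}\,(a g_k)(h_{\pm\nu}(\xi))/\kappa(\xi)$ and invert, and the constants combine to $C(k,\xi)$ as you indicate. Your observation that this function lies in $L^1(\nu^\perp)$, because it is the pullback of $a g_k\in L^1(S_{\pm\nu})$, usefully justifies the inversion step, which the paper leaves implicit while deferring the distributional interchanges to the cited references.
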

\begin{proof}
	We begin by performing the partial Fourier transform of the measurement data defined in \autoref{eq:meas} with respect to the observation positions $x\in e_d^\perp+Le_d$, that is, we calculate
	\[
	\mathcal{F}_{e_d^\perp+Le_d}[x\mapsto m(x, y)](k) = \mathcal{F}_{e_d^\perp+Le_d}u_y(k) \qquad \text{for all } k\in e_d^\perp,
	\]
	where $u_y$ denotes the $y$-shifted Born approximation.
	Substituting the expression for $u_y$ from \autoref{eq:uy} and interchanging the order of integration gives
	\begin{equation}\label{eq:Ft1}
		\mathcal{F}_{e_d^\perp+Le_d}[x\mapsto m(x, y)](k) =\int_{\Sp^{d-1}_{k_0}}a(s)\mathcal{F}_{e_d^\perp+Le_d}[x\mapsto w(x, s)](k)e^{-i\inner{y}{s}}dS(s),
	\end{equation}
	where $w$ denotes the scattered wave induced by the plane wave $e^{i\inner{x}{s}}$, see \autoref{eq:w}. The interchange of the integrals in \autoref{eq:Ft1} is justified only in the distributional sense.
	Specifically, since the scattering potential $f\in L^1(\R^d)$ has compact support, it follows from \cite[Theorem~2.2]{KirQueSet25} that the corresponding field $w$ is a tempered distribution.
	Under these conditions, the same argument used in the proof of \cite[Theorem~2.6]{KirNauSchYan24} applies, ensuring that the exchange of integration and the Fourier transform is valid in the sense of tempered distributions.
	
	By the classical Fourier diffraction theorem, which in arbitrary spatial dimension is given in \cite[Corollary~4.1]{KirQueSet25}, one has for each fixed $s\in\Sp^{d-1}_{k_0}$ the relation
	\begin{equation}\label{eq:FDT}
		\mathcal{F}_{e_d^\perp+Le_d}[x\mapsto w(x, s)](k) = \sqrt{\frac{\pi}{2}}\frac{ie^{i\kappa({k})L}}{\kappa( k)}\mathcal{F}_df({h}_{e_d}({k})-{s}) \qquad \text{for all }{k}\in\mathcal B^{d}_{k_0}\cap e_d^\perp.
	\end{equation}
	 So only the Fourier coefficients of $f$ on the upper hemisphere centered at $s$, see \autoref{fig:hemisphere}, influence the Fourier transform of $w$ with respect to the first component. Substituting \autoref{eq:FDT} into \autoref{eq:Ft1} gives us
	\begin{equation}\label{eq:fft_focusspot}
		\mathcal{F}_{e_d^\perp+Le_d}[x\mapsto m(x, y)](k)=\sqrt{\frac{\pi}{2}}\frac{ie^{i\kappa(k)L}}{\kappa( k)}\int_{\Sp^{d-1}_{k_0}}a({s})\mathcal F_d f( h_{e_d}({k})-{s})e^{-i \inner{y}{s}}dS(s)
	\end{equation}	
	for all $ k\in\mathcal B^{d}_{k_0}\cap e_d^\perp$ and  ${y}\in \nu^\perp$.

	To further evaluate the integral in \autoref{eq:fft_focusspot}, we decompose the sphere into 
	\[
	\Sp^{d-1}_{k_0}= S_{\nu}\cup \overline{S_{-\nu}}= S_{\nu}\cup S_{-\nu}\cup \left({\Sp_{k_0}^{d-1}\cap\nu^\perp}\right),
	\]
	where $S_{\pm\nu}$ denotes the two hemispheres separated by the hyperplane $\nu^\perp$, see \autoref{eq:Salpha}. The boundary ${\Sp_{k_0}^{d-1}\cap\nu^\perp}$ between the hemispheres is a null set in $\Sp^{d-1}_{k_0}$ and does therefore not contribute to the integral in \autoref{eq:fft_focusspot}, which then becomes with the abbreviation $g({s}) \coloneqq a({s})\mathcal F_d f( h_{e_d}({k})-{s})$
	\begin{equation}
		\label{eq:fft_focusspot2}
		\int_{\Sp^{d-1}_{k_0}}g({s})e^{-i \inner{y}{s}}dS(s)=\int_{S_\nu}g({s})e^{-i \inner{y}{s}}dS(s)+\int_{S_{-\nu}}g({s})e^{-i \inner{y}{s}}dS(s).
	\end{equation}
	
	Next, we perform the substitution $s=h_{\pm\nu}(\xi)$, where $h_{\pm\nu}$ denotes the parametrization of each hemisphere according to \autoref{eq:para}.
	For $y\in\nu^\perp$ we have 
	\[
	\inner{y}{h_{\pm\nu}(\xi)}=\inner{y}{\xi}+\kappa(\xi)\inner{y}{\pm\nu} = \inner{y}{\xi},\quad \xi\in\mathcal{B}_{k_0}^d\cap\nu^\perp.
	\]
	
	Consider the standard parametrization $h\colon\mathcal{B}_{k_0}^{d-1}\to S_{e_d}$ of the upper hemisphere defined by $h(\bar \xi)\coloneqq(\bar\xi,\kappa(\bar\xi))$. Its surface element is given by the Gram determinant	\[
	\sqrt{\det(J_{{h}}(\bar{\xi})^\intercal J_{{h}}(\bar\xi))} =\frac{k_0}{\kappa(\bar\xi)}, \qquad \bar \xi\in\mathcal{B}_{k_0}^{d-1},
	\]
	where $J_{h}$ is the Jacobian of $h$. Since the parametrizations $h_{\pm\nu}$ differ from $h$ only by a rotation, and rotations preserve the surface measure, they induce the same surface element.
	
	Applying this change of variables in  \autoref{eq:fft_focusspot2}, we therefore obtain
	\begin{align*}
		\int_{\Sp^{d-1}_{k_0}}g({s})e^{-i\inner{y}{s}}dS({s})=k_0\int_{\mathcal{B}_{k_0}^{d}\cap  \nu^\perp}g\left(h_{\nu}(\xi)\right)\frac{e^{-i\inner{y}{\xi}}}{\kappa(\xi)}dS(\xi) +k_0\int_{\mathcal{B}_{k_0}^{d}\cap \nu^\perp}g(h_{-\nu}(\xi))\frac{e^{-i\inner{y}{\xi}}}{\kappa(\xi)}dS(\xi).
	\end{align*}
	Introducing $\mathbf{1}_M$ as the indicator function of a set $M$ and using the Fourier transform along $\nu^\perp$, defined in \autoref{eq:fftnu}, we have
	\[
	\int_{\mathcal{B}_{k_0}^{d}\cap\nu^\perp} 
	g(h_{\pm\nu}(x))\,\frac{e^{-i\inner{y}{\xi}}}{\kappa(x)}\,dS(\xi)
	= (2\pi)^{\frac{d-1}2}\mathcal{F}_{\nu^\perp}\Bigl[\xi \mapsto \mathbf{1}_{\{\|\xi\|<k_0\}}(\xi)\,\frac{g(h_{\pm\nu}(\xi))}{\kappa(\xi)} \Bigr](y) .
	\]

	Finally,  \autoref{eq:fft_focusspot} can be rewritten as 
	\begin{align}
		\mathcal{F}_{e_d^\perp+Le_d}[x\mapsto m(x, y)](k)
		=  (2\pi)^{\frac d2}\frac{ ik_0 e^{i\kappa(k)L}}{2\kappa(k)}
		\Biggl(
		\mathcal{F}_{\nu^\perp}&\Bigl[\xi \mapsto \mathbf{1}_{\{\|\xi\|<k_0\}}(\xi)\,\frac{g(h_{\nu}(\xi))}{\kappa(\xi)} \Bigr](y) \notag\\
		\qquad & +
		\mathcal{F}_{\nu^\perp}\Bigl[\xi \mapsto \mathbf{1}_{\{\|\xi\|<k_0\}}(\xi)\,\frac{g(h_{-\nu}(\xi))}{\kappa(\xi)} \Bigr](y)
		\Biggr)
	\end{align}
	and applying the inverse Fourier transform $\mathcal{F}_{\nu^\perp}^{-1}$ with respect to $y\in\nu^\perp$ then gives for $k\in \mathcal B^{d}_{k_0}\cap e_d^\perp$ and $\xi \in \nu^\perp$ the formula
	\[
	\mathcal F_{e_d^\perp+Le_d,\nu^\perp} m(k,\xi) =
	\begin{cases}
		C(k,\xi)\Bigl(g(h_\nu(\xi)) + g(h_{-\nu}(\xi))\Bigr), & \|\xi\| < k_0,\\
		0, & \|\xi\| \ge k_0,
	\end{cases}
	\]
	with the constant $C(k,\xi)$ as defined in \autoref{eq:const}. Thus, using the definition of $g$ completes the proof.
\end{proof}

The values of $\mathcal F_{e_d^\perp+Le_d,\nu^\perp}m(k,\xi)$ for $k\in e_d^\perp\setminus\mathcal B_{k_0}^d$ are generally not zero, but they decay exponentially as $L\to\infty$ and they are therefore in practice usually overshadowed by noise. So we do not want to use them for the reconstruction of $f$ from the measurements $m$.

\begin{figure}[t]
	\centering
	\begin{subfigure}[b]{0.45\textwidth}
	\includegraphics{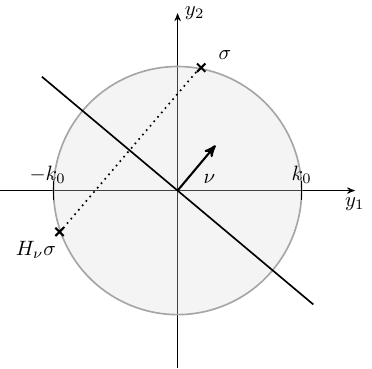}	
	\caption{$a(\sigma)\neq0$ for $\sigma\in \Sp^{d-1}_{k_0}$}
	\label{subfig:full}
	\end{subfigure}	\hfil
	\begin{subfigure}[b]{0.45\textwidth}
	\includegraphics{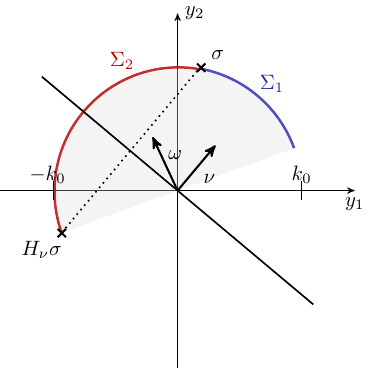}
	\caption{$a(\sigma)\neq0$ only for $\sigma\in S_{\omega}$}
	\label{subfig:half}
	\end{subfigure}	
	\caption{Illustration of the symmetry with respect to the scan plane $\nu^\perp$. (a) All directions $\sigma\in\Sp^{d-1}_{k_0}$ contribute to the Fourier diffraction relation when the beam profile $a\in L^{2}(\Sp^{d-1}_{k_0})$ is fully supported.
		(b) If $a(\sigma)\neq 0$ only for $\sigma\in S_{\omega}$, then only directions in $S_{\omega}$ contribute. Here, the subset $\Sigma_1$  contains directions whose reflections lie outside $ S_{\omega}$, whereas for $\Sigma_2$  the reflections remain inside.}
	\label{fig:symeta}
\end{figure}

\begin{remark}[Connection to Synthetic Aperture Diffraction Tomography]
	In the pure trans\-mission experiment, as illustrated in \autoref{subfig:TI}, we select $\nu = \omega = e_d$ and choose, as required in \autoref{eq:halfsupport}, a Herglotz density $a\in L^2(\Sp^{d-1}_{k_0})$ with $a(s) = 0$ for all $s\in S_{-e_d}$. Under these conditions, the coefficient $a(h_{-\nu}(\xi))$ of the second term on the right-hand side of \autoref{eq:PFDT} is zero, and the Fourier transform of the measurements simplifies to 
	\begin{align*}
		\mathcal F_{e_d^\perp +Le_d,e_d^\perp}m( k, \xi) =
		C({k},\xi)	a\left(h_{e_d}(\xi)\right)\mathcal F_d f\left( h_{e_d}({k})-h_{e_d}(\xi)\right),\qquad   k,\xi\in\mathcal B^{d}_{k_0}\cap e_d^\perp.
	\end{align*}
	This relation coincides with the Fourier diffraction formula known from \emph{synthetic aperture diffraction tomography}, as presented in \cite[eq. (28)]{NahPanKak84} for $d=2$. In their setup, a single-element transducer is moved along a line and is excited sequentially with measurements taken behind the object. The spherical waves by each transducer element are modeled via a superposition of plane waves with the same amplitude, which corresponds to a Herglotz wave with constant Herglotz density $a$. This is why our formula resembles the Fourier diffraction relation in this special case.
\end{remark}

For the subsequent analysis, we aim to rewrite \autoref{eq:PFDT} in a more compact form by noting that the expression on the right hand side depends rather on the points
\[
\eta \coloneqq h_{e_d}(k) \in S_{e_d} \quad\text{and}\quad
\sigma \coloneqq h_\nu(\xi) \in S_\nu
\]
than on the values $k$ and $\xi$ in the parameter domain. We therefore rewrite the Fourier transformed measurements as a function of $\eta$ and $\sigma$.

\begin{definition}[Reduced measurements]
	We define the \emph{reduced measurements}
	\begin{equation}\label{eq:data_o}
		\hat{m}(h_{e_d}(k),h_{\nu}(\xi)) \coloneqq \frac{\mathcal F_{e_d^\perp+Le_d,\nu^\perp} m(k,\xi)}{ C(k,\xi)}, \qquad k\in\mathcal B^{d}_{k_0}\cap e_d^\perp, \ \xi\in\mathcal{B}^d_{k_0}\cap \nu^\perp,
	\end{equation}
	where the prefactor $C({k},\xi)$ is given by \autoref{eq:const}.
\end{definition}

Hereby, we can express the points $h_{-\nu}(\xi)$ on the right-hand side of \autoref{eq:PFDT} with the Householder transform, defined by  
\[
H_v\colon\R^d\to \R^d,\qquad H_v(x)  \coloneqq  x-2\inner{x}{v}v,
\]
for an arbitrary vector $v\in\Sp^{d-1}$, which reflects a point $x$ across the hyperplane orthogonal to $v$, via
\[
H_\nu h_\nu(\xi) = h_{-\nu}(\xi) \in S_{-\nu}
\]
as a linear function of $\sigma$, see \autoref{subfig:full}.

The reduced measurements defined in \autoref{eq:data_o} are then related to the Fourier coefficients of the scattering potential via
\begin{equation}\label{eq:fdt_nopar}
	\hat{m}(\eta,\sigma) = a(\sigma)\,\mathcal{F}_d f(\eta-\sigma) + a(H_\nu\sigma)\,\mathcal{F}_d f(\eta-H_\nu\sigma)
	\qquad \text{for all }  \eta\in S_{e_d}, \ \sigma\in S_\nu.
\end{equation}

Because of the symmetry of the right-hand side, let us, for convenience, extend the reduced measurement function $\hat m$ from \autoref{eq:data_o} to $S_{e_d}\times\Sp_{k_0}^{d-1}$ via
\begin{equation}\label{eq:data_o_ext}
\hat m(\eta,\sigma)\coloneqq\hat m(\eta,H_\nu\sigma)\qquad\text{for every } \eta\in S_{e_d}\text{ and }\sigma\in S_{-\nu}.
\end{equation}

\autoref{thm:FDT} holds for every choice of Herglotz density $a\in L^2(\Sp^{d-1}_{k_0})$. In our beam scanning setup shown in \autoref{subfig:genset}, however, the incident beam propagates in a fixed direction $\omega\in\Sp^{d-1}$, meaning that we impose \autoref{eq:halfsupport}, which restricts the Herglotz density to functions supported on the corresponding hemisphere $S_{\omega}\subseteq \Sp^{d-1}_{k_0}$. Even more, motivated by our prime example of a Gaussian beam, we want to enforce that our incident beam contains all plane waves moving in a direction of $S_\omega$, that is, $a(\sigma)\neq 0$ if and only if $\sigma\in S_\omega$.

Under this restriction, the coefficients $a(\sigma)$ and $a(H_\nu\sigma)$ in \autoref{eq:fdt_nopar} vanish if and only if $\sigma\in S_\omega$ and $H_\nu\sigma\in S_\omega$, respectively. Using the symmetry of the equation, we thus only consider values of $\sigma$ in \autoref{eq:fdt_nopar} where the first coefficient is non-zero and and differ between the two cases where the second coefficient is zero or non-zero. That is, we take $\sigma\in S_\omega$ and split $S_\omega$ into the two disjoint parts
\begin{equation}\label{eq:Sigma}
	 \Sigma_1\coloneqq \left\lbrace\sigma\in S_{\omega} \mid H_\nu\sigma\notin S_{\omega}\right\rbrace\quad\text{and}\quad \Sigma_2 \coloneqq  \left\lbrace \sigma\in S_{\omega} \mid H_\nu\sigma\in S_{\omega}\right\rbrace.
\end{equation}
A visualization of these sets is provided in \autoref{subfig:half}.

\begin{example}[The sets $\Sigma_1$ and $\Sigma_2$ for perpendicular and parallel scans]\label{ex:Sigmas}
	In the perpendicular scan configuration, where $\nu = \pm \omega$, the beam is orthogonal to the scan hyperplane. In this case, the reflection $H_\nu\sigma$ lies for every $\sigma \in S_{\omega}$ entirely outside $S_{\omega}$ so that 
	\[\Sigma_1 = S_{\omega}, \qquad \Sigma_2 = \emptyset.\] 
	In contrast, for the parallel scan, where $\omega \perp \nu$, the beam propagates along the scan plane. Here, 
	\[\Sigma_1 = \emptyset, \qquad \Sigma_2 = S_{\omega}.\] 
	For the tilted scan, where neither perpendicular nor parallel alignment holds, both sets $\Sigma_1$ and $\Sigma_2$ are non-empty, proper subsets of $S_\omega$. 
\end{example}

With this notation, we can now explicitly characterize which Fourier coefficients of the measurement $m$ correspond to only one Fourier coefficient of the scattering potential $f$ and which are a linear combination of two Fourier coefficients of $f$.

\begin{corollary}[Fourier diffraction relation for scanning measurements]\label{cor:FDT2}
	Let the assumptions of \autoref{thm:FDT} hold and assume that $a\in L^2(\Sp^{d-1}_{k_0})$ fulfills \autoref{eq:halfsupport}. Moreover, we partition the hemisphere $S_\omega$ into the sets $\Sigma_1$ and $\Sigma_2$ introduced in \autoref{eq:Sigma}.

 Then, the reduced measurement function $\hat m\colon S_{e_d}\times\Sp_{k_0}^{d-1}\to\C$ from \autoref{eq:data_o}, extended via \autoref{eq:data_o_ext}, is given by
		\begin{equation}\label{eq:FDT_o}
				\hat{m}(\eta, \sigma) =
					\begin{cases}
							a(\sigma) \mathcal{F}_df( \eta - \sigma) & \text{if } \sigma \in \Sigma_1, \vspace{2mm} \\
							a( \sigma) \mathcal{F}_df( \eta - \sigma) + a(H_\nu\sigma) \mathcal{F}_df( \eta - H_\nu\sigma) & \text{if } \sigma \in \Sigma_2
						\end{cases}
				\end{equation}
			for all $\eta \in S_{{e}_d}$.

If we additionally have that $a(\sigma)\ne 0$ if $\sigma\in S_{\omega}$, then all the coefficients $a(\sigma)$ and $a(H_\nu\sigma)$ in \autoref{eq:FDT_o} are non-zero.
\end{corollary}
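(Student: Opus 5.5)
The plan is to derive the corollary directly from the compact form \autoref{eq:fdt_nopar} of \autoref{thm:FDT}, together with the extension \autoref{eq:data_o_ext} and the support condition \autoref{eq:halfsupport}. The argument has two parts. First, a case analysis on $\sigma\in S_\omega$ evaluates \autoref{eq:fdt_nopar}, which is valid for $\sigma\in S_\nu$, and transports it to every $\sigma\in\Sp_{k_0}^{d-1}$ via the extension. Second, the support assumption on $a$ removes the appropriate terms.

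The first step shows that \autoref{eq:fdt_nopar} holds for all $\sigma\in S_\nu\cup S_{-\nu}$, not only for $\sigma\in S_\nu$. Take $\sigma\in S_{-\nu}$. Then $H_\nu\sigma\in S_\nu$, and by \autoref{eq:data_o_ext}
\[
\hat m(\eta,\sigma)=\hat m(\eta,H_\nu\sigma)=a(H_\nu\sigma)\mathcal F_df(\eta-H_\nu\sigma)+a(\sigma)\mathcal F_df(\eta-\sigma),
\]
where we used that $H_\nu$ is an involution ($H_\nu H_\nu\sigma=\sigma$). So the right-hand side of \autoref{eq:fdt_nopar} is symmetric under $\sigma\mapsto H_\nu\sigma$, and the formula holds on $S_\nu\cup S_{-\nu}$.

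One subtlety is the great sphere $\Sp_{k_0}^{d-1}\cap\nu^\perp$, where $\hat m$ is not defined by \autoref{eq:data_o}. This set has measure zero, and $\Sigma_1,\Sigma_2$ are subsets of $S_\omega$. It matters only if $S_\omega$ meets $\nu^\perp$, which happens exactly when $\omega\neq\pm\nu$. I would handle this either by reading the identity almost everywhere or by noting that the case $\sigma\in\nu^\perp$, where $H_\nu\sigma=\sigma$, is excluded by the parametrization. I expect this bookkeeping to be the only delicate point. I would state that the identity is understood on $S_\omega\setminus\nu^\perp$, which is a set of full measure.

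The second step is the case split for $\sigma\in S_\omega$.
\begin{itemize}
\item If $\sigma\in\Sigma_1$, then $H_\nu\sigma\notin S_\omega$, i.e.\ $\inner{H_\nu\sigma}{\omega}\le0$. By \autoref{eq:halfsupport} this gives $a(H_\nu\sigma)=0$, so the second term drops and the first line of \autoref{eq:FDT_o} follows.
\item If $\sigma\in\Sigma_2$, both terms are kept, which is exactly the second line of \autoref{eq:FDT_o}.
\end{itemize}

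Finally, assume in addition that $a(\sigma)\neq0$ for all $\sigma\in S_\omega$. Then $a(\sigma)\ne0$ holds in both cases because $\sigma\in S_\omega$. In the case $\sigma\in\Sigma_2$ we also have $H_\nu\sigma\in S_\omega$ by definition, hence $a(H_\nu\sigma)\neq0$. So every coefficient appearing in \autoref{eq:FDT_o} is non-zero, which completes the proof.
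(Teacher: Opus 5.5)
Your proposal is correct and follows essentially the same route as the paper's brief in-text argument preceding the corollary. Both use the $H_\nu$-symmetric form \autoref{eq:fdt_nopar} carried over to $S_{-\nu}$ via \autoref{eq:data_o_ext}. The support condition \autoref{eq:halfsupport} then removes the term $a(H_\nu\sigma)$ on $\Sigma_1$, while both terms survive on $\Sigma_2$. Your remark about the equator $\Sp_{k_0}^{d-1}\cap\nu^\perp$ is a point the paper glosses over: this set lies in $\Sigma_2$ whenever $\omega\neq\pm\nu$, yet $\hat m$ is not defined there by \autoref{eq:data_o} and \autoref{eq:data_o_ext}. Reading the identity on $S_\omega\setminus\nu^\perp$ is the right fix.
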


	\section{Fourier reconstruction}\label{sec:Fourier_recon}

The aim is to solve the inverse problem of recovering the scattering potential $f$ from the given reduced measurements $\hat m$, where we will assume throughout that the Herglotz density $a\in L^2(\Sp_{k_0}^{d-1})$ is chosen such that $a(\sigma)\ne0$ if and only if $\sigma\in S_\omega$ so that no coefficients in \autoref{eq:FDT_o} vanish.

\autoref{cor:FDT2} provides us now with a linear equation system for the Fourier coefficients of~$f$. Besides those coefficients appearing in the first case of \autoref{eq:FDT_o}, which can be read off directly from the measurements, it is not a priori clear if the system allows us to uniquely determine all the involved Fourier coefficients.

Either way, the measurements only depend on a subset of the full Fourier data so that we have to perform a Fourier inversion using just the frequencies that are accessible from the data. This is commonly known as \emph{filtered backpropagation} \cite{Dev82}. This approach is widely used and has been intensively studied for different experimental setups. For detailed discussions on the theoretical foundations, numerical aspects, and practical implementation, we refer to \cite{Dev82, KakSla01, KirQueRitSchSet21, KirQueSet25, MueSchuGuc15_report}.

In the following, we distinguish two different backpropagation approaches based on which regions of Fourier space are used for the inversion: only those which we get directly from the first part in \autoref{eq:FDT_o} versus the maximal set of Fourier coefficients which are uniquely determined by \autoref{eq:FDT_o}.

\subsection{Naive backpropagation}
Reconstructing $f$ via Fourier inversion requires an explicit representation of the Fourier coefficients of $f$ in terms of the reduced measurements $\hat m$. According to \autoref{cor:FDT2}, the reduced measurements are influenced by the Fourier transform of $f$ on the set
\begin{equation}\label{eq:Y}
	\mathcal{Y}\coloneqq \left\lbrace \eta-\sigma \mid \eta\in S_{e_d}, \ \sigma\in S_\omega\right\rbrace \subseteq\R^d.
\end{equation}
We consider two (not necessarily disjoint) subsets $\mathcal Y_1$ and $\mathcal Y_2$ of $\mathcal Y$ corresponding to the two cases in \autoref{eq:FDT_o}.
\begin{enumerate}
\item
 On the first subset
\begin{equation}\label{eq:Y1}
	\mathcal{Y}_1\coloneqq\{\eta-\sigma \mid \eta\in S_{e_d},\ \sigma\in\Sigma_1\} \subseteq\mathcal{Y},
\end{equation}
the Fourier coefficients appear explicitly in the Fourier diffraction relation.
More precisely, since $a(\sigma)\neq 0$ for all $\sigma\in\Sigma_1$, each coefficient can be explicitly recovered via
\begin{equation}\label{eq:recd2}
	\mathcal{F}_df(\eta-\sigma)=\frac{\hat m(\eta,\sigma)}{a(\sigma)}\qquad \text{for all } \eta\in S_{e_d}, \ \sigma\in\Sigma_1.
\end{equation}

\item
The situation is different for Fourier coefficients on the second subset
\begin{equation}\label{eq:Y2}
	\mathcal{Y}_2\coloneqq\left\lbrace \eta-\sigma \mid \eta\in S_{e_d}, \ \sigma\in\Sigma_2\right\rbrace \subseteq\mathcal Y,
\end{equation}
where every measurement value is given as a linear combination of two Fourier coefficients. Consequently, reconstruction on $\mathcal{Y}_2\setminus\mathcal{Y}_1$ is not straightforward and requires additional considerations.
\end{enumerate}

A naive approach consists of ignoring this second relation in \autoref{cor:FDT2} entirely, effectively neglecting the region $\mathcal{Y}_2\setminus\mathcal{Y}_1$. Since, in this case, our knowledge of $\mathcal{F}_df$ is restricted to $\mathcal{Y}_1\subseteq\R^d$, we must extend $\mathcal{F}_df$ beyond $\mathcal{Y}_1$ in order to perform inverse Fourier transform.

Because $f$ has compact support, the Paley--Wiener theorem implies that  $\mathcal{F}_df$ is analytic and can therefore, in principle, be uniquely determined everywhere by analytic continuation if we know it on any domain. However, this theoretical argument has little practical relevance as one has only discrete data. In the absence of a better choice, we could simply extend $\mathcal{F}_df$ by zero outside $\mathcal{Y}_1$, leading to something like a low-pass filtered approximation $f^{\mathrm{naive}}$ of $f$. 
\begin{definition}[Naive backpropagation]\label{def:Nbp}
		Given the Fourier coefficients $\mathcal{F}_df$ on the set $\mathcal{Y}_1\subseteq \R^d$, we define the \emph{naive backpropagation} in our setting as 
	\begin{equation*}
		f^{\mathrm{naive}}(x)\coloneqq  (2\pi)^{-\frac{d}{2}} \int_{\R^{d}} \bm 1_{\mathcal{Y}_1}(y)\mathcal{F}_df(y)e^{i\inner{x}{y}}dy, \qquad x\in \R^d,
	\end{equation*}
	where $\bm 1_{\mathcal{Y}_1}$ denotes the indicator function of the set $\mathcal{Y}_1$.
\end{definition}

\begin{example}[Naive backpropagation for perpendicular scans]
	Consider again the perpendicular scan configuration discussed in \autoref{ex:Sigmas}, where the beam is shifted perpendicular to its propagation direction. In this case, we have $\Sigma_2=\emptyset$ and $\Sigma_1=S_{\omega}$, where $S_{\omega}$ refers to the hemisphere oriented along $\omega\in\Sp^{d-1}$. Consequently, the Fourier diffraction relation in \autoref{eq:FDT} reduces to its first equation, and the reduced measurements provide direct access to all Fourier coefficients of $f$ on the whole region $\mathcal{Y}_1 =\mathcal Y$. 
\end{example}

\subsection{Advanced backpropagation}\label{subsec:adv}
If the scan direction deviates from the perpendicular configuration, i.e., for scans where $\omega\neq \nu$, the set $\Sigma_2$ becomes non-empty, and correspondingly $\mathcal{Y}_2\neq\emptyset$. Ignoring the reduced measurements associated with $\mathcal{Y}_2\setminus\mathcal Y_1$ can then lead to a substantial loss in Fourier coverage. In the extreme case of a parallel scan, recall \autoref{ex:Sigmas}, one has $\Sigma_1 =\emptyset$ while $\Sigma_2= S_\omega$, so that this approach would not leave us any data from which we could recover the function $f$. 

We are therefore interested in investigating the following central question: is it possible to uniquely recover the individual Fourier coefficients on $\mathcal{Y}_2\setminus\mathcal Y_1$ from \autoref{eq:FDT_o}, neglecting the analytic nature of $\mathcal F_df$? That is, given the reduced measurements~$\hat m$ from \autoref{eq:data_o}, what is the largest set $Y\subseteq\R^n$ for which every solution $\phi\colon\R^d\to\C$ of the linear system
\begin{equation}\label{eq:sys_eq}
	\hat{m}(\eta,\sigma) =	\begin{cases}
		a(\sigma ) \phi( \eta - \sigma ) & \text{if } \sigma \in \Sigma_1, \vspace{2mm} \\
		a( \sigma ) \phi( \eta - \sigma ) + a(H_\nu\sigma) \phi( \eta - H_\nu\sigma ) & \text{if } \sigma \in \Sigma_2
	\end{cases} \qquad \text{for all }  \eta\in S_{e_d}
\end{equation}
fulfills that $\phi(y)=\mathcal F_df(y)$ for all $y\in Y$?

This question is discussed in detail in \cite{ElbNau26_preprint}. It turns out that the outcome depends on the dimensionality of the acquired data in relation to the dimensionality of the unknown function: In our experimental setup, the incident beam is translated along the $(d-1)$-dimensional hyperplane $\nu^\perp$, and measurements are collected on the $(d-1)$-dimensional plane $e_d^\perp+Le_d$. Thus, the acquisition process yields a $(2d-2)$-dimensional dataset, while the goal is to recover the unknown function $\mathcal F_df$ on $\R^d$. For $d>2$, the data dimension exceeds the object dimension $d$ and this additional information allows under some generic conditions that all Fourier coefficients on the set $\mathcal{Y}$ can be uniquely determined from \autoref{eq:sys_eq}. In contrast, the data and object dimension coincide for $d=2$ and the uniqueness can only be established on a specific subset $\mathcal Y_1\cup\tilde{\mathcal Y}$ of $\mathcal Y$ for some set $\tilde{\mathcal{Y}}\subseteq \mathcal{Y}_2\setminus\mathcal Y_1$.

According to \cite[Theorem 6.5]{ElbNau26_preprint}, we obtain in two dimensions the following characterization.

\begin{theorem}[Fourier coverage in two dimensions]\label{thm:uq1}
	Let $d=2$, $\omega,\nu \in \Sp^{d-1}$, $a\colon\Sp_{k_0}^{1}\to \C$ be a function satisfying $a(\sigma)\ne 0$ if and only if $\sigma\in S_{\omega}$, and $\hat m\colon S_{e_d}\times S_\omega\to\C$ be such that \autoref{eq:sys_eq} has a solution $\phi\colon\R^2\to\C$.

Define the subset
	\begin{equation}\label{eq:tildeSig}
		\tilde\Sigma\coloneqq\left\lbrace \sigma\in \Sigma_2\cap S_{e_2} \mid H_\nu\sigma \notin S_{e_2} \right\rbrace \subseteq \Sigma_2,
	\end{equation}
	and let the corresponding Fourier space coverage be
	\begin{equation}\label{eq:tildeY}
		\tilde{\mathcal{Y}}\coloneqq \left\lbrace \eta -\sigma\mid \eta\in (-\Sigma_1)\cap S_{e_2},\ \sigma\in\tilde \Sigma \right\rbrace \subseteq \mathcal{Y}_2\setminus \mathcal{Y}_1.
	\end{equation}

\begin{enumerate}
\item
We then have for every other solution $\tilde{\phi}\colon\R^2\to\C$ of \autoref{eq:sys_eq} that
\[ \phi=\tilde\phi \quad \text{almost everywhere on} \ \mathcal{Y}_1\cup \tilde{\mathcal{Y}}. \]
\item
Conversely, there exists for almost every $y\in\mathcal{Y}_2\setminus(\mathcal{Y}_1\cup \tilde{\mathcal{Y}})$ a solution $\tilde{\phi}$ of \autoref{eq:sys_eq} with $\phi(y)\neq \tilde{\phi}(y)$.
\end{enumerate}
\end{theorem}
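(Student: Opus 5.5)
By linearity, it suffices to study the homogeneous system. The difference $\psi\coloneqq\phi-\tilde\phi$ of two solutions of \autoref{eq:sys_eq} satisfies \autoref{eq:sys_eq} with $\hat m=0$. Both parts of the theorem then concern the kernel of this system: part~1 says that every kernel element vanishes almost everywhere on $\mathcal Y_1\cup\tilde{\mathcal Y}$, and part~2 says that for almost every point of the remainder of $\mathcal Y_2$ some kernel element is nonzero there.

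For part~1, the case $\sigma\in\Sigma_1$ gives $a(\sigma)\psi(\eta-\sigma)=0$ with $a(\sigma)\neq0$. Hence $\psi=0$ on $\mathcal Y_1$ directly.

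Now let $y=\eta-\sigma\in\tilde{\mathcal Y}$, with $\eta\in(-\Sigma_1)\cap S_{e_2}$ and $\sigma\in\tilde\Sigma$. The equation for the pair $(\eta,\sigma)$ reads
\[
a(\sigma)\psi(\eta-\sigma)+a(H_\nu\sigma)\psi(\eta-H_\nu\sigma)=0 .
\]
The plan is to show that the second argument already lies in $\mathcal Y_1$. Since $H_\nu\sigma\notin S_{e_2}$, the point $\eta'\coloneqq -H_\nu\sigma$ lies in $S_{e_2}$, up to the null set where $\langle\sigma,\nu\rangle$-reflection hits the equator. Since $\eta\in-\Sigma_1$, the point $\sigma'\coloneqq-\eta$ lies in $\Sigma_1$. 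This gives the rewriting
\[
\eta-H_\nu\sigma=\eta'-\sigma'\in\mathcal Y_1 .
\]
So $\psi(\eta-H_\nu\sigma)=0$ almost everywhere, and with $a(\sigma)\ne0$ it follows that $\psi(y)=0$.

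Some measure-theoretic care is needed here. Everything is stated almost everywhere, and the map $(\eta,\sigma)\mapsto\eta-\sigma$ from $S_{e_2}\times S_\omega$ to $\R^2$ is a local diffeomorphism away from the degenerate set where $\eta$ and $\sigma$ are parallel. Null sets in the parameters therefore correspond to null sets in Fourier space, and "$\psi=0$ a.e. on $\mathcal Y_1$" transfers to the parametrized statements used above.

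For part~2, I would construct a nonzero kernel element explicitly by a chain (orbit) argument. Each equation with $\sigma\in\Sigma_2$ links the two points
\[
y=\eta-\sigma\quad\text{and}\quad y'=\eta-H_\nu\sigma .
\]
Introduce the graph on $\R^2$ whose edges are these pairs. A point $y$ usually has several representations $y=\eta-\sigma$, since in $d=2$ two circles meet in two points, so the graph has small degree. The construction proceeds as follows.
\begin{enumerate}
\item Fix $y\in\mathcal Y_2\setminus(\mathcal Y_1\cup\tilde{\mathcal Y})$ and follow the finitely many or countably many edges from $y$, obtaining its connected component.
\item Show that for generic $y$ this component meets neither $\mathcal Y_1$ nor $\mathcal Y\setminus\mathcal Y_2$ in a forcing way.
\item Along the component, define $\psi$ by prescribing $\psi(y)=1$ and propagating the values $\psi(y')=-a(\sigma)\psi(y)/a(H_\nu\sigma)$.
\item Set $\psi=0$ elsewhere.
\end{enumerate}
Step~2 must exclude exactly the configurations in which the chain terminates in $\mathcal Y_1$; these are precisely the configurations of $\tilde\Sigma$ and $-\Sigma_1$ that define $\tilde{\mathcal Y}$. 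A one-point kernel element alone would not suffice, because every equation containing that point must be satisfied. The component construction handles this automatically, provided the propagated values are consistent, which requires that no cycle in the component has a nontrivial product of ratios.

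The main obstacle is this consistency and termination analysis in part~2: controlling the orbit structure of the reflections and translations for almost every starting point, and ruling out cycles or forced vanishing. I would first try to show that for generic $y$ the component is a tree, using analyticity and transversality of the circle parametrizations so that coincidences occur only on null sets. If that proves too delicate, I would fall back on the cited result \cite[Theorem 6.5]{ElbNau26_preprint}, from which the statement is taken.
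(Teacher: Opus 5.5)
Your part~1 is correct and complete. The equations in \autoref{eq:sys_eq} hold pointwise, so $\psi=\phi-\tilde\phi$ vanishes everywhere on $\mathcal Y_1$. The swap $\eta-H_\nu\sigma=(-H_\nu\sigma)-(-\eta)$ then places the partner point in $\mathcal Y_1$, except on the null set $\inner{H_\nu\sigma}{e_2}=0$. The paper gives no argument of its own here; it imports the theorem from \cite[Theorem~6.5]{ElbNau26_preprint}, so falling back on that citation is exactly what the paper does.

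As a proof, however, your part~2 has a genuine gap. Steps~2 and~3 of your construction are the whole content of the claim, and you leave both open. Moreover, the route you suggest for them would fail, because components are \emph{not} generically trees. Take reflection imaging $\omega=-e_2$ with tilted $\nu$. The arc $\Sigma_2\subseteq S_{-e_2}$ is $H_\nu$-invariant, and for all $\sigma_0,-\eta_0\in\Sigma_2$ the points $y_0=\eta_0-\sigma_0$, $y_1=\eta_0-H_\nu\sigma_0$, $y_2=H_\nu\eta_0-H_\nu\sigma_0$, $y_3=H_\nu\eta_0-\sigma_0$ are joined by four valid $\Sigma_2$-equations into a closed cycle. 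Such $y_0$ fill a set of positive measure in $\mathcal Y_2\setminus\mathcal Y_1$, and $\tilde{\mathcal Y}=\emptyset$ in this case.

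The missing idea is this: for $d=2$, a point $y$ with $0<\norm{y}<2k_0$ has exactly two representations $y=\eta-\sigma$ with $\eta,\sigma\in\Sp^1_{k_0}$, namely $(\eta,\sigma)$ and $(-\sigma,-\eta)$ (the same swap you used in part~1). So every point carries at most two equations, and the component of $y_0=\eta_0-\sigma_0$, $\sigma_0\in\Sigma_2$, is contained in $\{y_0,y_1,y_2,y_3\}$. With $b(\sigma)=a(\sigma)/a(H_\nu\sigma)$, the propagation factors around this cycle are $-b(\sigma_0)$, $-b(-\eta_0)$, $-b(\sigma_0)^{-1}$, $-b(-\eta_0)^{-1}$. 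Their product is $1$, so consistency is automatic and no genericity or analyticity argument is needed.

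It remains to locate $\Sigma_1$-equations on the component. The points $y_2,y_3$ only belong to it when $-\eta_0\in\Sigma_2$, and then all their equations are $\Sigma_2$-equations. So apart from $y_0$ itself, only $y_1$ can be forced to zero, namely when $-\eta_0\in\Sigma_1$ and $-H_\nu\sigma_0\in S_{e_2}$. Combined with $y_0\notin\mathcal Y_1$, which then requires $-\sigma_0\notin S_{e_2}$, this gives $\eta_0\in(-\Sigma_1)\cap S_{e_2}$ and $\sigma_0\in\tilde\Sigma$ up to a null set, i.e.\ $y_0\in\tilde{\mathcal Y}$.

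Coincidences among the $y_j$ occur when $\inner{\sigma_0}{\nu}=0$, $\inner{\eta_0}{\nu}=0$, $y_0\in\nu^\perp$, or $y_0\in\R\nu$. There a merged equation such as $(a(\sigma_0)+a(H_\nu\sigma_0))\psi(y_0)=0$ could force vanishing, but these are null sets. Then $\tilde\phi=\phi+\psi$, with $\psi$ supported on this finite component and normalized by $\psi(y_0)=1$, proves part~2.
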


Let us briefly illustrate these additional sets $\tilde\Sigma$ and $\tilde{\mathcal Y}$ in an example.

\begin{example}[Shape of the additional Fourier coverage in two dimensions]\label{ex:Ytilde}
	Let $d=2$ and consider the configuration $\nu\coloneqq e_2$ and $\omega\coloneq\frac1{\sqrt2}(1,-1)$ describing the scan plane and the beam direction.

	For every $\sigma=(\sigma_1,\sigma_2)\in\Sp_{k_0}^1$, the reflection at $\nu^\perp$ is given by
	\[
	H_\nu\sigma = \sigma-2\inner{\sigma}{e_2}e_2 = (\sigma_1,-\sigma_2).
	\]
	 To determine whether $\sigma\in \Sp_{k_0}^1$ belongs to $\Sigma_1$ or $\Sigma_2$, we need to check whether $\sigma\in S_{\omega}$ and the reflected vector $H_\nu\sigma$ remains in $S_\omega$, see \autoref{eq:Sigma}. Specifically, 
	\[
	\sigma\in S_\omega \ \Leftrightarrow \ \inner{\sigma}{\omega}>0\qquad \text{and} \qquad H_\nu\sigma\in S_\omega\ \Leftrightarrow \ \inner{\sigma}{H_\nu\omega}=\inner{H_\nu\sigma}{\omega} >0.
	\]
	Hence, we obtain
	\[
	\Sigma_1 = \Sp_{k_0}^1 \cap  \Omega_\omega \cap \overline{\Omega_{-H_\nu\omega}} \quad \text{and} \quad	\Sigma_2 = \Sp_{k_0}^1 \cap   \Omega_\omega \cap   \Omega_{H_\nu\omega}.
	\]
	where $\Omega_v$ denotes the half-space in the direction of the vector $v\in \Sp^{1}$, see \autoref{eq:H}.
	That is, $\Sigma_1$ and $\Sigma_2$ are the circular arcs obtained by intersecting the circle $\Sp_{k_0}^1$ with the two half-spaces defined by $\omega$ and $\mp H_\nu\omega$. In particular, we have for this configuration
	\[
	(-\Sigma_1)\cap S_{e_2}= -\Sigma_1\quad\text{and}\quad\tilde{\Sigma} = \Sigma_2\cap\overline{S_{e_2}}.
	\] 
	A visualization of this construction is presented in \autoref{fig:tildeSigma}.
	Thus, besides in the set $\mathcal Y_1$, we can recover the Fourier coefficients also almost everywhere in the additional region 
	\[
	\tilde{\mathcal{Y}} =\left\lbrace\eta-\sigma \mid \eta\in-\Sigma_1 , \ \sigma \in \Sigma_2\cap\overline{S_{e_2}}\right\rbrace \subseteq \mathcal{Y}_2\setminus \mathcal{Y}_1
	\] 
	from the data. \autoref{fig:oblique_tilted} visualizes $\tilde{\mathcal{Y}}$ for this configuration; a detailed discussion of the Fourier coverage follows in the next section.	
\end{example}

\begin{figure}[t]
	\centering	
	\includegraphics{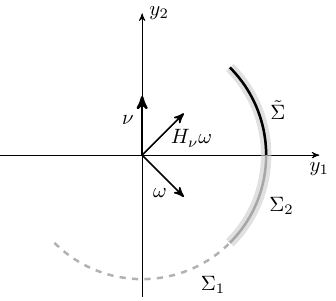}
	\caption{Illustration of the decomposition of $S_\omega\subseteq \R^2$ corresponding to \autoref{ex:Ytilde}.
		The two half-planes defined by the directions $\omega = (1/\sqrt{2}, -1/\sqrt{2})$ and $H_\nu\omega = (1/\sqrt{2}, 1/\sqrt{2})$ divide the semicircle $S_\omega$ into two arcs, $\Sigma_1$ and $\Sigma_2$.
		Because the reflection at the line $\nu^\perp$ (here, the $y_1$-axis) maps every point in $\Sigma_2$ to the opposite vertical half-plane, the part of $\Sigma_2$ lying above the $y_1$-axis constitutes $\tilde{\Sigma}$.}
	\label{fig:tildeSigma}
\end{figure}

To formulate the uniqueness result in higher dimensions $d>2$, we first introduce some notation. 
 Throughout, we denote by $\mathrm Db(\sigma)(v)$ the covariant derivative of a differentiable function $b\in C^1(\Sp^{d-1}_{k_0};\C)$ at a point $\sigma\in\Sp^{d-1}_{k_0}$ in the tangent direction $v\in\mathrm T_\sigma\Sp^{d-1}_{k_0}$ and by $\nabla b(\sigma)\in\mathrm T_\sigma\Sp^{d-1}_{k_0}$ the corresponding gradient. 
 
 In addition, for an arbitrary vector $q\in\R^d\setminus\{0\}$, we denote by
\begin{equation}\label{eq:projection}
	\pi_q\colon\R^d\to\R^d,\qquad\pi_q x\coloneqq x-\inner xq\frac{q}{\norm q^2}
\end{equation}
the orthogonal projection onto $q^\perp$.

The following result is a direct consequence of  \cite[Theorem\ 5.7]{ElbNau26_preprint} together with \cite[Theorem\ 4.5]{ElbNau26_preprint}.
\begin{theorem}[Fourier coverage in more than two dimensions]\label{thm:uq2}
Let $d\in\N\setminus\{1,2\}$ and $\omega,\nu \in \Sp^{d-1}$. Suppose that $\phi,\tilde{\phi}\in C(\R^d;\C)$ are two continuous solutions of \autoref{eq:sys_eq} for some given function $\hat m\colon S_{e_d}\times S_\omega\to\C$ and a function $a\colon\Sp^{d-1}_{k_0}\to\C$ satisfying $a(\sigma)\neq0$ if and only if  $\sigma\in S_{\omega}$.

We define the function
\begin{equation}\label{eq:def_b}
b(\sigma)\coloneqq \frac{a(\sigma)}{a(H_\nu\sigma)},\qquad \sigma\in\Sigma_2
\end{equation}
and assume that
\begin{enumerate}
	\item if $d=3$, it holds that $b\in C^2(\Sigma_2;\C)$ and $\Sigma_2 = \overline{\left\lbrace \sigma\in\Sigma_2 \mid\mathrm  D b(\sigma)(\nu\times\sigma)\neq 0\right\rbrace }$ and
	\item if $d>3$, it holds that $b\in C^1(\Sigma_2;\C)$ and $\Sigma_2 = \overline{\{\sigma\in\Sigma_2 \mid \nabla b(\sigma)\notin \C \pi_\sigma \nu\}}$.
\end{enumerate}
Then $\phi(y)=\tilde{\phi}(y)$ for all $y\in\mathcal Y$.
\end{theorem}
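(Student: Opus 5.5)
Since both \(\phi\) and \(\tilde\phi\) solve the linear system \autoref{eq:sys_eq} with the same data \(\hat m\), I would pass to the difference \(\psi\coloneqq\phi-\tilde\phi\in C(\R^d;\C)\). It satisfies the homogeneous system
\[
\psi(\eta-\sigma)=0 \ \text{ for } \eta\in S_{e_d},\ \sigma\in\Sigma_1,
\qquad
b(\sigma)\,\psi(\eta-\sigma)+\psi(\eta-H_\nu\sigma)=0 \ \text{ for } \eta\in S_{e_d},\ \sigma\in\Sigma_2 ,
\]
where I divided by \(a(H_\nu\sigma)\neq0\) and used \(b\) from \autoref{eq:def_b}. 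The statement then reduces to showing \(\psi=0\) on \(\mathcal Y\). Since \(\psi\) vanishes on \(\mathcal Y_1\) by the first relation, only \(\mathcal Y_2\) remains.

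The plan is to use the excess dimension of the data, which is \(2d-2>d\). For fixed \(y\in\mathcal Y_2\), the representations \(y=\eta-\sigma\) with \(\eta\in S_{e_d}\) and \(\sigma\in\Sigma_2\) form a \((d-2)\)-dimensional family. Explicitly, \(\sigma\) runs over the set \((S_{e_d}-y)\cap\Sigma_2\), an intersection of two spheres of radius \(k_0\), which is a \((d-2)\)-sphere piece. Along this family the partner point \(z=\eta-H_\nu\sigma=y+\sigma-H_\nu\sigma=y+2\langle\sigma,\nu\rangle\nu\) moves only along the line \(y+\R\nu\). This gives
\[
\psi\bigl(y+2\langle\sigma,\nu\rangle\nu\bigr)=-b(\sigma)\,\psi(y)
\]
for all \(\sigma\) in the family. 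If two family members \(\sigma,\sigma'\) have the same value of \(\langle\sigma,\nu\rangle\) but \(b(\sigma)\neq b(\sigma')\), then \(\psi(y)\bigl(b(\sigma)-b(\sigma')\bigr)=0\), hence \(\psi(y)=0\).

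So the core step is to show that on each level set \(\{\langle\sigma,\nu\rangle=c\}\) of the family, which is \((d-3)\)-dimensional, \(b\) is not constant. This is exactly where the hypotheses enter. The tangent directions of the family at \(\sigma\) that keep \(\langle\sigma,\nu\rangle\) fixed are orthogonal to \(\sigma\), to \(\eta\) and to \(\pi_\sigma\nu\). For \(d=3\) the level set is discrete: the family is a circle and the level set consists of finitely many points. In that case I would instead differentiate the relation along the circle and exploit the \(C^2\)-regularity, using the condition \(\mathrm Db(\sigma)(\nu\times\sigma)\ne0\), where \(\nu\times\sigma\) is the tangent direction generating rotations about \(\nu\) that preserve \(\langle\sigma,\nu\rangle\). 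Symmetric pairs of points on the circle (reflection of the circle across the plane spanned by \(\nu\) and its axis) have equal \(\langle\cdot,\nu\rangle\), and non-vanishing of that derivative should separate their \(b\)-values for generic \(y\). For \(d>3\) the level set is a positive-dimensional manifold. The condition \(\nabla b(\sigma)\notin\C\pi_\sigma\nu\) says precisely that \(b\) has nonzero derivative in some direction tangent to the sphere and orthogonal to \(\pi_\sigma\nu\), i.e. within a level set of \(\langle\cdot,\nu\rangle\). One has to check that such a direction can also be taken orthogonal to \(\eta\), possibly after choosing \(y\) generically.

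The main obstacle is this geometric compatibility step: showing that for a dense set of \(y\in\mathcal Y_2\) there is a representation \(\sigma\) in the dense good set of \(\Sigma_2\) where the nondegeneracy produces two partners with equal \(\langle\cdot,\nu\rangle\) but different \(b\)-values. I would handle it through a transversality and implicit function argument on the incidence manifold \(\{(\eta,\sigma)\}\), treating the degenerate configurations as a closed nowhere dense set. Once \(\psi(y)=0\) holds on a dense subset of \(\mathcal Y\), continuity of \(\psi\) extends it to all of \(\mathcal Y\), which gives \(\phi=\tilde\phi\) on \(\mathcal Y\).
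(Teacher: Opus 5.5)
Your reduction to $\psi\coloneqq\phi-\tilde\phi$ is correct. So is the relation $\psi\bigl(y+2\inner{\sigma}{\nu}\nu\bigr)=-b(\sigma)\psi(y)$ for $\sigma\in A_y\coloneqq\set{\sigma\in\Sigma_2\mid \sigma+y\in S_{e_d}}$, and so is the two-representations trick. For $d>3$ these give a complete proof that uses only $b\in C^1$. The paper itself gives no argument here; it imports the statement from Theorems~4.5 and~5.7 of the companion preprint. The step you call the main obstacle is a plain dimension count. Take $\sigma$ in the open dense set where $\nabla b(\sigma)\notin\C\pi_\sigma\nu$, and put $W\coloneqq\set{\sigma,\nu}^\perp$ and $K\coloneqq\ker \mathrm Db(\sigma)|_W\subsetneq W$. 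If $\sigma,\eta,\nu$ are independent, the level set through $\sigma$ is a $(d-3)$-manifold with tangent space $W\cap\eta^\perp$. The bad $\eta$ are those with $\eta\in\operatorname{span}\set{\sigma,\nu}$ or $W\cap\eta^\perp\subseteq K$. They lie in a fixed subspace of dimension at most $3$, which is nowhere dense in $S_{e_d}$ once $d\ge 4$.

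The genuine gap is the case $d=3$: your sketch there is not an argument, and the mechanism it suggests does not work. Here $A_y$ is an arc of the circle $C_y\coloneqq\set{\sigma\in\Sp^2_{k_0}\mid \norm{\sigma+y}=k_0}$. Two of its points share the value of $\inner{\cdot}{\nu}$ only if they are mirror images $\sigma,R_y\sigma$ under the reflection $R_y$ in the plane $\operatorname{span}\set{y,\nu}$. The hypothesis $\mathrm Db(\sigma)(\nu\times\sigma)\ne0$ is pointwise. It therefore only controls $b(\sigma)-b(R_y\sigma)$ when $\sigma$ and $R_y\sigma$ are close, i.e.\ near the fixed points $C_y\cap\operatorname{span}\set{y,\nu}$, where the tangent of $C_y$ is indeed parallel to $\nu\times\sigma$. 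So your local argument yields $\psi(y)=0$ only for those $y$ whose arc $A_y$ contains such a fixed point, and this set is not dense in $\mathcal Y_2$. In general there are nonempty open sets of $y\in\mathcal Y_2\setminus\mathcal Y_1$ for which $\inner{\cdot}{\nu}$ is injective on $A_y$. An example is $y$ for which $C_y$ meets $S_\omega$ only in a short arc inside $\Sigma_2\cap(S_{e_3}-y)$, away from $\operatorname{span}\set{y,\nu}$. For such $y$, every relation containing $\psi(y)$ has a different partner $y+2\inner{\sigma}{\nu}\nu$. These relations only express $\psi$ at the partners through $\psi(y)$ and cannot force $\psi(y)=0$. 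Hence no argument at the single point $y$, and no density-plus-continuity argument built on it, reaches these $y$; your claim that the derivative condition separates the $b$-values ``for generic $y$'' fails. What is missing is a non-local step. One option is to propagate zeros of $\psi$ along the lines $y+\R\nu$ through chains of relations back to configurations where the local argument or $\mathcal Y_1$ applies, with a proof that such chains exist for a dense set of $y$. Another is to take $\log\psi$ on an open set where $\psi\neq0$ and differentiate the resulting cocycle identity until it contradicts the derivative condition. The fact that $d=3$ requires $b\in C^2$ while $d>3$ needs only $C^1$ is a strong hint that such a higher-order argument is needed. ``Differentiate along the circle and exploit $C^2$'' does not supply it.
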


We remark that, in the three-dimensional case, the density condition on $\Sigma_2$ taken from \cite[Theorem\ 5.7]{ElbNau26_preprint} is formulated as a condition on the gradient, as it is done for $d>3$. The proof shows that, in three dimensions, the two formulations are equivalent. Here, we use the formulation in terms of the directional derivative, since it is more convenient for the example below.

The additional conditions in \autoref{thm:uq2} on the Herglotz density $a\in L^2(\Sp^{d-1}_{k_0})$ are not very restrictive and generally satisfied for most practical choices. We verify that the standard Gaussian beam profile given in \autoref{eq:gaussian} modeling a focused beam satisfies these conditions in the three-dimensional case. 

\begin{example}[Maximal Fourier coverage for Gaussian beams]\label{ex:checkGaus}
Let $d=3$, $\omega\in\Sp^{2}$, and $\nu\in \Sp^{2}$ be given. We consider as Herglotz density the function $a$ defined in \autoref{eq:gaussian} for some $A>0$. The function $b\colon\Sigma_2\to\C$ from \autoref{eq:def_b} is then given by
\[ b(\sigma) = \exp\left(A(\norm{\pi_\omega H_\nu\sigma}^2-\norm{\pi_\omega\sigma}^2)\right). \]

To check the condition in \autoref{thm:uq2}, we first calculate the covariant derivative of $b$ and find
\[ \mathrm  D b(\sigma)(\nu\times\sigma) = 2Ab(\sigma)\inner{\pi_\omega(H_\nu\sigma-\sigma)}{\nu\times\sigma}\quad\text{for every }\sigma\in\Sigma_2. \]
Using $H_\nu\sigma-\sigma = -2\inner{\sigma}{\nu}\nu$ gives us
\[ \mathrm  D b(\sigma)(\nu\times\sigma) = 4Ab(\sigma)\inner{\sigma}{\nu}\inner{\nu}{\omega}\inner{\omega}{\nu\times\sigma}. \]

	The derivative vanishes precisely when one of the three inner products is zero. More specifically, if
	\begin{enumerate}
		\item $\inner{\nu}{\omega} = 0$, corresponding to a scan direction parallel to the beam axis,
		\item $\inner{\sigma}{\nu}=0$, i.e., $\sigma\in\Sigma_2\cap\nu^\perp$, or
		\item $\inner{\omega}{\nu\times\sigma}=0$, which is equivalent to $\sigma\in \Sigma_2\cap\operatorname{span}\{\nu,\omega\}$.
	\end{enumerate}
	Case~(i) must be excluded, since in this configuration the derivative vanishes everywhere.
	In cases~(ii) and~(iii), the zeros occur only along one-dimensional curves on $\Sigma_2$, so they do not occupy any open part of the surface. 
	
	Consequently,
	\[
	\Sigma_2=\overline{\{\sigma\in\Sigma_2\mid \mathrm  D b(\sigma)(\nu\times\sigma)\neq0\}},
	\]
	and the assumption of \autoref{thm:uq1} is fulfilled whenever the scan normal $\nu$ and the beam direction $\omega$ satisfy $\inner{\nu}{\omega}\neq0$.
\end{example}

In summary, the naive backpropagation, introduced in \autoref{def:Nbp}, ignores Fourier data arising from the second term in the diffraction relation, thereby discarding potentially valuable information about the object in Fourier space. In two dimensions, \autoref{ex:Ytilde}, however, identifies a scan configuration that allows us to recover the additional Fourier region $\tilde{\mathcal{Y}}\neq\emptyset$. In three dimensions, \autoref{ex:checkGaus} shows that the use of focused beams allows us to achieve the full coverage $\mathcal{Y}$ defined in \autoref{eq:Y}, provided that the scan normal satisfies $\inner{\nu}{\omega}\neq 0$.

\begin{definition}[Advanced Fourier coverage]\label{def:Abp}
	Given the Fourier coefficients $\mathcal{F}_df$ on the \emph{advanced Fourier coverage} 
	\[
	\mathcal{Y}^\mathrm{adv} \coloneqq
	\begin{cases}
		\mathcal{Y}_1 \cup \tilde{\mathcal{Y}}, & d = 2,\\[4pt]
		\mathcal{Y}, & d > 2,
	\end{cases}
	\]
	the corresponding \emph{advanced backpropagation} is then defined by
	\begin{equation*}
		f^{\mathrm{adv}}(x) \coloneqq (2\pi)^{-\frac{d}{2}} \int_{\R^d} \bm 1_{\mathcal{Y}^\mathrm{adv}}(y) \, \mathcal{F}_d f(y) \, e^{i\inner{x}{y} } \, dy, \qquad x\in \R^d.
	\end{equation*}
\end{definition}

Hence, this advanced backpropagation yields an approximation $f\approx f^\mathrm{adv}$. In contrast to the naive backpropagation, the additional frequencies in $\tilde{\mathcal{Y}}\subseteq \mathcal{Y}_2$ for $d=2$ and $\mathcal{Y}_2\setminus\mathcal{Y}_1$ for $d>2$ are included by taking into account the second relation in \autoref{cor:FDT2}.

\section{Fourier coverages for 2D scan geometries}\label{sec:2Dcoverages}
Since our analysis in the special case $d=2$ has shown that, depending on the scan configuration, certain Fourier coefficients may be inaccessible, we now turn to investigating the advanced Fourier coverage $\mathcal{Y}^\mathrm{adv} = \mathcal{Y}_1\cup\tilde{\mathcal{Y}}$ for exemplary scan configurations. The influence of different Fourier coverages on the actual reconstruction of the scattering potential has been studied extensively in earlier works, including \cite{Dev89b, KakSla01, KirQueSet25}, and we therefore restrict our attention here to the analysis of the coverage itself.

Specifically, we focus on our three representative scenarios: transmission imaging, characterized by $\omega = e_2$, reflection imaging with $\omega=-e_2$, and oblique imaging, where $\omega\in \Sp^1\setminus \{\pm e_2\}$. In addition, we distinguish whether the beam is shifted perpendicular to its propagation direction or at a tilted angle, which we refer to as a perpendicular scan and a tilted scan, respectively, see \autoref{fig:scangeo}. In \autoref{tab:scan_configurations} we provide an overview of the scan setups considered, specifying the beam direction $\omega$, the scan plane normal to $\nu$, and referencing to the figures that illustrate the corresponding Fourier coverage.

\begin{table}[t]
	\centering
	\renewcommand{\arraystretch}{1.5}
	\begin{tabular}{|p{4cm}|c|c|c|c|}
		\hline
		\textbf{Imaging Mode} & \textbf{Beam Direction} & \textbf{Scan Normal} 
		& \textbf{Coverage} & \textbf{Figure} \\
		\hline
		Standard Transmission  & $\omega=e_2$  & $\nu=\omega$    & $\mathcal Y$    & \multirow{2}{*}{\autoref{fig:coverage_TI}} \\
		Tilted-Scan Transmission & $\omega=e_2$& $\nu\neq\omega$ & $\mathcal Y_1\subsetneq\mathcal Y$ & \\
		\hline
		Standard Reflection    & $\omega=-e_2$ & $\nu=\omega$    & $\mathcal Y$    & \multirow{2}{*}{\autoref{fig:coverage_RI}} \\
		Tilted-Scan Reflection & $\omega=-e_2$ & $\nu\neq\omega$ & $\mathcal Y_1\subsetneq\mathcal Y$ & \\ 
		\hline
		Oblique Incidence & $\omega\notin \{\pm e_2\}$ & $\nu=\omega$ & $\mathcal Y$      & \autoref{fig:coverage_OI} \\
		Oblique Incidence\newline\hspace*{1em} and Tilted Scan         & $\omega\notin \{\pm e_2\}$ & $\nu\neq\omega$ & $\mathcal Y_1\cup\tilde{\mathcal Y}$ & \autoref{fig:oblique_tilted} \\
		\hline
	\end{tabular}
	\caption{Overview of exemplary 2D scan configurations and their associated Fourier coverage.}
\label{tab:scan_configurations}
\end{table}

\subsection{Transmission imaging}	
We start by considering the Fourier coverages in transmission imaging. Here, the incident beam propagates along the vertical direction $\omega=e_2$ and the detector positioned at $x_2=L$ captures only the transmitted components of the scattered waves. Since $\Sigma_2\subseteq S_{e_2}$ and since we have by definition that $H_\nu\sigma\in\Sigma_2$ for every $\sigma\in\Sigma_2$, there is no point $\sigma\in\Sigma_2$ with $H_\nu\sigma\notin S_{e_2}$, which means that $\tilde{\Sigma}=\emptyset$, see \autoref{eq:tildeSig}. Consequently, no additional coverage $\tilde{\mathcal{Y}}$ arises, and the advanced Fourier coverage is 
\[
\mathcal{Y}^\mathrm{adv}= \mathcal{Y}_1=\{\eta-\sigma\mid \eta\in S_{e_2}, \ \sigma \in \Sigma_1\}\subseteq \R^2.
\]
This coverage is illustrated in \autoref{fig:coverage_TI} for various orientations $\nu\in\Sp^1$ of the scan line $\nu^\perp$. When the scan normal is aligned with the beam, i.e., $\nu = \omega$ (standard transmission), we have $\Sigma_1 = S_{e_2}$ and $\Sigma_2=\emptyset$ resulting in the largest possible coverage.  It is formed by two disks of radius $k_0$ centered at $(\pm k_0,0)$. If the scan normal $\nu$ is no longer aligned with the beam direction $e_2$, the setup corresponds to a tilted scan in transmission imaging. In this situation, the set $\Sigma_1$ shrinks, leading to a corresponding reduction of the accessible Fourier region $\mathcal{Y}_1$.

\begin{figure}[t]
	\centering
	\begin{subfigure}{0.3\textwidth}
		\includegraphics[scale = 1]{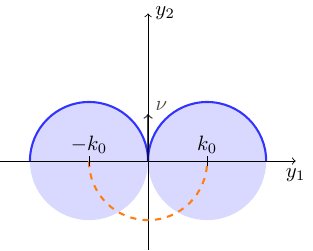}
		\caption{$\theta=\frac\pi2$}
	\end{subfigure}\hfil
	\begin{subfigure}{0.3\textwidth}
		\includegraphics[scale = 1]{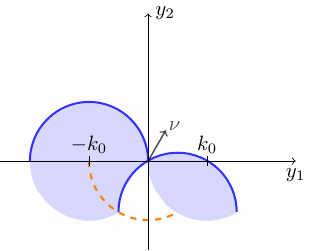}
		\caption{ $\theta=\frac{\pi}{3}$}
	\end{subfigure}\hfil
	\begin{subfigure}{0.3\textwidth}
		\includegraphics[scale = 1]{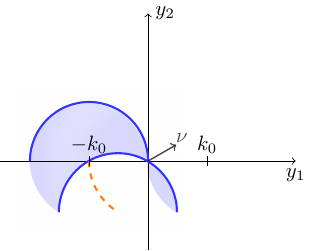}
		\caption{ $\theta=\frac{\pi}{6}$}
	\end{subfigure}
	\caption{\textbf{Perpendicular vs.\ tilted scan in transmission imaging}: the figure illustrates how the total coverage $\mathcal{Y}^\mathrm{adv}=\mathcal{Y}_1\subseteq \R^2$ changes with the scan configuration in transmission imaging. The beam propagates along $\omega = e_2$, while its focal point moves along $\nu^\perp$ with normal $\nu = (\cos\theta,\ \sin\theta) \in \Sp^1$ for different angles $\theta\in[0,\pi]$.  The corresponding set $-\Sigma_1\subseteq S_{- e_2}$ is shown as a dashed orange arc.  The perpendicular scan, $\omega=\nu=e_2$, achieves maximum coverage.}
	\label{fig:coverage_TI}
\end{figure}

\subsection{Reflection imaging}
We continue with reflection imaging, corresponding to the classical configuration used in ultrasound imaging, where a transducer both emits and receives signals. In this setup, the incident beam propagates downward, $\omega=-e_2$, and the detector records only the reflected components of the scattered waves.  As in the transmission case, we obtain $\tilde{\Sigma}=\emptyset$ and the advanced Fourier coverage reduces to $\mathcal{Y}^\mathrm{adv} =  \mathcal{Y}_1 \subseteq \R^2$. 

It is visualized in \autoref{fig:coverage_RI} for different scan orientations $\nu \in  \Sp^1$. When $\nu = \omega$, we are in standard reflection imaging mode and we have $\Sigma_1 = S_{-e_2}$ and $\Sigma_2=\emptyset$, so the region $\mathcal Y^{\mathrm{adv}}$ consists of the upper half-disk of radius $2k_0$ without the two disks of radius $k_0$ centered at $(0, \pm k_0)$. As in the transmission case, tilting the scan plane relative to the beam direction reduces $\Sigma_1$, and correspondingly the coverage $\mathcal{Y}_1$ shrinks.

\begin{figure}[t]
	\centering
	\begin{subfigure}{0.3\textwidth}
		\includegraphics[scale = 1]{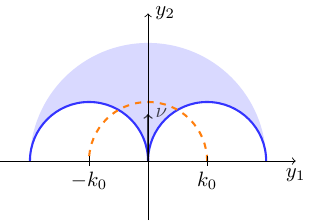}
		\caption{$\theta=\frac\pi2$}
	\end{subfigure}\hfil
	\begin{subfigure}{0.3\textwidth}
		\includegraphics[scale = 1]{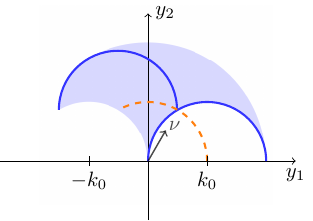}
		\caption{ $\theta=\frac{\pi}{3}$}
	\end{subfigure}\hfil
	\begin{subfigure}{0.3\textwidth}
		\includegraphics[scale = 1]{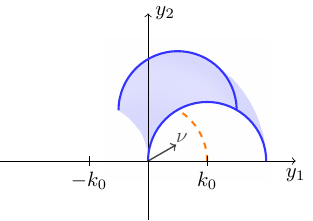}
		\caption{ $\theta=\frac{\pi}{6}$}
	\end{subfigure}
	\caption{\textbf{Perpendicular vs.\ tilted scan in reflection imaging}:  the figure illustrates how the total coverage $\mathcal{Y}^\mathrm{adv}=\mathcal{Y}_1\subseteq \R^2$ changes with the scan configuration in reflection imaging. The beam propagates along $\omega = -e_2$, while the focal point moves along $\nu^\perp$ with normal $\nu = (\cos\theta,\ \sin\theta) \in \Sp^1$. Varying the tilt angle $\theta \in [0,\pi]$ produces different accessible Fourier regions. The dashed orange arc depicts the set $-\Sigma_1 \subseteq S_{e_2}$. The perpendicular scan, $\omega = -\nu=-e_2$, again yields the largest coverage.}
	\label{fig:coverage_RI}
\end{figure}

As already discussed in \cite{Mor89, Nat15}, we observe a fundamental difference between reflection and transmission imaging. In transmission imaging, low-frequency components of the scattering potential are accessible from the measurement data, whereas in reflection imaging this is generally not the case.  This makes reflection imaging more challenging, as low-frequency components carry quantitative information about the target function. For tomographic ultrasound imaging, it is therefore natural to consider deviations from the classical reflection imaging setup, for example by sending out a beam at an oblique angle, in order to gain partial access to low-frequency information. Recent hardware developments, such as multi-aperture and flexible transducers mentioned in the introduction, support this approach and make it feasible.

\subsection{Oblique imaging}		
Motivated by this, we demonstrate coverages for oblique imaging, where the incident beam propagates in an arbitrary direction $\omega\in\Sp^1\setminus\set{\pm e_2}$. In such configurations, the detector positioned at $x_2=L$ captures a mixture of transmitted and reflected components of the scattered waves. We first discuss the case where the orientation of the scan line coincides with the beam direction, i.e., $\nu = \omega$. In this situation, no loss occurs since $\mathcal{Y}_2=\emptyset$ and the corresponding advanced coverage is given by $\mathcal{Y}^\mathrm{adv}=\mathcal{Y}_1$. These coverages for different $\omega$ are depicted in in \autoref{fig:coverage_OI}. In general, the more transmission data are available, the more low-frequency components of the object are captured.

If the scan normal is tilted against the beam direction, $\nu \neq \omega$, the relevance of our analysis in \autoref{subsec:adv} becomes evident since $\mathcal{Y}_2$ is non-empty in this case. In addition, the set $\tilde{\mathcal Y}$ may also be non-empty, allowing for an advanced Fourier coverage $\mathcal{Y}_1\cup\tilde{\mathcal{Y}}$. In \autoref{fig:oblique_tilted}, we illustrate the scenario discussed in \autoref{ex:Ytilde}, where $\omega=(1/\sqrt{2}, \ -1/\sqrt{2})$ and $\nu = e_2$. A naive backpropagation, see \autoref{def:Nbp}, reconstructs $f$ using only the frequencies in the blue region $\mathcal{Y}_1$, whereas an advanced backpropagation, see \autoref{def:Abp}, additionally includes the green region $\tilde{\mathcal{Y}}$.

\begin{figure}[t]
	\centering
	\begin{subfigure}{0.3\textwidth}
		\includegraphics[scale = 1]{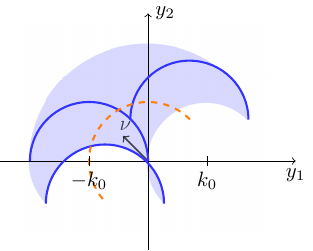}
		\caption{$\theta = -\frac{\pi}{4}$}
	\end{subfigure}\hfil
	\begin{subfigure}{0.3\textwidth}
		\includegraphics[scale = 1]{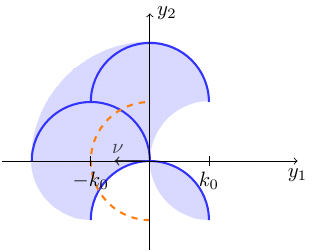}
		\caption{ $\theta = 0$}
	\end{subfigure}\hfil
	\begin{subfigure}{0.3\textwidth}
		\includegraphics[scale = 1]{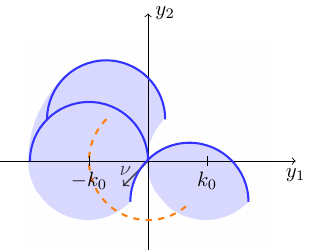}
		\caption{ $\theta =\frac{\pi}{4}$}
	\end{subfigure}
	\caption{\textbf{Perpendicular scan in oblique imaging}: visualization of the coverage $\mathcal{Y}^\mathrm{adv}=\mathcal{Y}_1\subseteq \R^2$ in  oblique imaging. The beam direction and the normal to the scan line coincide, i.e., $\nu=\omega=(\cos\theta,\ \sin\theta)$ for different angles $\theta\in[-\pi,\pi)$. Here, the set $-\Sigma_1= S_{- \omega}$ is highlighted as a dashed orange semicircle.  }
	\label{fig:coverage_OI}
\end{figure}

\begin{figure}[t]
	\centering
	\includegraphics{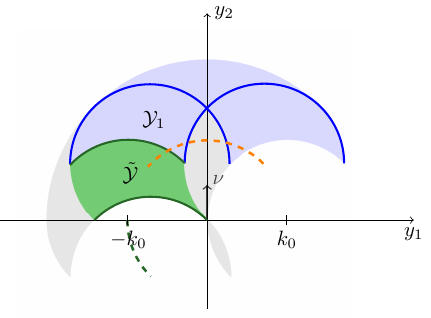}
	\caption{\textbf{Naive vs.\ advanced Fourier coverage}: visualization of the Fourier coverage for the experiment described in \autoref{ex:Ytilde}, where oblique imaging is performed with $\omega = (1/\sqrt{2},-1/\sqrt{2})$ and a tilted scan with $\nu = e_2$. The naive coverage $\mathcal{Y}_1$, shown in blue, corresponds to the Fourier coefficients accessible directly from the first equation of the Fourier diffraction relation. In contrast, the advanced coverage $\mathcal{Y}^\mathrm{adv} = \mathcal{Y}_1 \cup \tilde{\mathcal{Y}}$, with $\tilde{\mathcal{Y}}$ highlighted in green, includes additional coefficients obtained via the second part of the relation. Here, the dashed orange arcs depicts the set $-\Sigma_1$ while the dashed green arc shows $-\tilde{\Sigma}$. The gray region, $\mathcal{Y}_2 \setminus (\mathcal{Y}_1 \cup \tilde{\mathcal{Y}})$, remains inaccessible.}
	\label{fig:oblique_tilted}
\end{figure}

\section{Conclusions}\label{sec:conclusion}
In this work, we extended classical diffraction tomography to a general scanning geometry in which focused beams are emitted from one side and scanned across the object. The beam and scan directions may be chosen independently. This provides, for the first time, a framework that allows diffraction tomography to be applied directly to standard ultrasound setups while also offering flexibility for future system designs.

Our central result is a new Fourier diffraction theorem that relates these scanning measurements to the object's scattering potential in Fourier space. Depending on the scanning geometry, some Fourier coefficients can be recovered directly from the data, while we have to solve a linear equations system to obtain the others.

We discussed that in dimensions higher than two all Fourier coefficients appearing in the measurements can, in principle, be recovered, since the associated linear system is solvable in most relevant configurations. In particular, solvability is guaranteed when the Herglotz wave models a Gaussian beam, which is often the situation encountered in practice.

In two dimensions, the situation is different, as the linear system is not uniquely solvable in general. However, the theory allows us to identify exactly which Fourier coefficients can be recovered, and in some cases we can still extract quite a bit more than only the directly accessible Fourier coefficients, which can be seen in the visualizations of the resulting Fourier coverages.

We are therefore confident that this framework can benefit the reconstructions for oblique imaging setups.

	\subsection*{Acknowledgments}
	This research was funded in whole, or in part, by the Austrian Science Fund
	(FWF) 10.55776/P34981 -- New Inverse Problems of Super-Resolved Microscopy (NIPSUM)
	and SFB 10.55776/F68 ``Tomography Across the Scales'', project F6807-N36
	(Tomography with Uncertainties) and F6804-N36 (Quantitative Coupled Physics Imaging). For open access purposes, the author has
	applied a CC BY public copyright license to any author-accepted manuscript
	version arising from this submission.
	The financial support by the Austrian Federal Ministry for Digital and Economic
	Affairs, the National Foundation for Research, Technology and Development and the Christian Doppler
	Research Association is gratefully acknowledged.

	\printbibliography

@INCOLLECTION{Agm90,
  AUTHOR = {Agmon, S.},
  BOOKTITLE = {Analysis, et Cetera},
  DATE = {1990},
  DOI = {10.1016/b978-0-12-574249-8.50008-0},
  PAGES = {39--76},
  TITLE = {A Representation Theorem for Solutions of the Helmholtz Equation and Resolvent Estimates for The Laplacian},
}

@ARTICLE{AgrPat79,
  AUTHOR = {Agrawal, G. P. and Pattanayak, D. N.},
  PUBLISHER = {Optica Publishing Group},
  DATE = {1979},
  DOI = {10.1364/josa.69.000575},
  JOURNALTITLE = {Journal of the Optical Society of America},
  NUMBER = {4},
  PAGES = {575--578},
  SHORTJOURNAL = {J. Opt. Soc. Amer.},
  TITLE = {Gaussian beam propagation beyond the paraxial approximation},
  VOLUME = {69},
}

@ARTICLE{AmmChoZou16,
  AUTHOR = {Ammari, H. and Chow, Y. T. and Zou, J.},
  DATE = {2016},
  DOI = {10.1137/15m1043959},
  ISSN = {0036-1399},
  JOURNALTITLE = {{SIAM} Journal on Applied Mathematics},
  NUMBER = {3},
  PAGES = {1000--1030},
  SHORTJOURNAL = {{SIAM} J. Appl. Math.},
  TITLE = {Phased and Phaseless Domain Reconstructions in the Inverse Scattering Problem via Scattering Coefficients},
  VOLUME = {76},
}

@ARTICLE{BorDruMamZas18,
  AUTHOR = {Borcea, L. and Druskin, V. and Mamonov, A. V. and Zaslavsky, M.},
  DATE = {2018},
  DOI = {10.1088/1361-6420/aabb16},
  ISSN = {0266-5611},
  JOURNALTITLE = {Inverse Problems},
  NUMBER = {6},
  PAGES = {065008},
  SHORTJOURNAL = {Inverse Probl.},
  TITLE = {Untangling the nonlinearity in inverse scattering with data-driven reduced order models},
  VOLUME = {34},
}

@ARTICLE{BorDruMamZasZim20,
  AUTHOR = {Borcea, L. and Druskin, V. and Mamonov, A. V. and Zaslavsky, M. and Zimmerling, J.},
  DATE = {2020},
  DOI = {10.1137/19m1296355},
  ISSN = {1936-4954},
  JOURNALTITLE = {{SIAM} Journal on Imaging Sciences},
  NUMBER = {2},
  PAGES = {685--723},
  SHORTJOURNAL = {{SIAM} J. Imaging Sciences},
  TITLE = {Reduced Order Model Approach to Inverse Scattering},
  VOLUME = {13},
}

@ARTICLE{BorGarMamZim22,
  AUTHOR = {Borcea, L. and Garnier, J. and Mamonov, A. V. and Zimmerling, J.},
  DATE = {2022},
  DOI = {10.1088/1361-6420/ac41d0},
  ISSN = {0266-5611},
  JOURNALTITLE = {Inverse Problems},
  NUMBER = {2},
  PAGES = {025004},
  SHORTJOURNAL = {Inverse Probl.},
  TITLE = {Reduced order model approach for imaging with waves},
  VOLUME = {38},
}

@BOOK{CakColHad16,
  AUTHOR = {Cakoni, F. and Colton, D. and Haddar, H.},
  PUBLISHER = {Society for Industrial and Applied Mathematics},
  DATE = {2016-11},
  DOI = {10.1137/1.9781611974461},
  TITLE = {Inverse Scattering Theory and Transmission Eigenvalues},
}

@ARTICLE{ColKir96,
  AUTHOR = {Colton, D. and Kirsch, A.},
  DATE = {1996},
  DOI = {10.1088/0266-5611/12/4/003},
  ISSN = {0266-5611},
  JOURNALTITLE = {Inverse Problems},
  NUMBER = {4},
  PAGES = {383--393},
  SHORTJOURNAL = {Inverse Probl.},
  TITLE = {A simple method for solving inverse scattering problems in the resonance region},
  VOLUME = {12},
}

@ARTICLE{ColKirPai89,
  AUTHOR = {Colton, D. and Kirsch, A. and P\"{a}iv\"{a}rinta, L.},
  DATE = {1989},
  DOI = {10.1137/0520096},
  ISSN = {0036-1410},
  JOURNALTITLE = {{SIAM} Journal on Mathematical Analysis},
  NUMBER = {6},
  PAGES = {1472--1483},
  SHORTJOURNAL = {{SIAM} J. Math. Anal.},
  TITLE = {Far-Field Patterns for Acoustic Waves in an Inhomogeneous Medium},
  VOLUME = {20},
}

@BOOK{ColKre19,
  AUTHOR = {Colton, D. and Kress, R.},
  PUBLISHER = {Springer},
  DATE = {2019},
  EDITION = {4},
  ISBN = {978-3-030-30350-1},
  NUMBER = {93},
  SERIES = {Applied Mathematical Sciences},
  TITLE = {Inverse Acoustic and Electromagnetic Scattering Theory},
}

@ARTICLE{ColMon88,
  AUTHOR = {Colton, D. and Monk, P.},
  DATE = {1988},
  DOI = {10.1093/qjmam/41.1.97},
  JOURNALTITLE = {The Quarterly Journal of Mechanics and Applied Mathematics},
  NUMBER = {1},
  PAGES = {97--125},
  SHORTJOURNAL = {Q. J. Mech. and Appl. Math.},
  TITLE = {The inverse scattering problem for time-harmonic acoustic waves in an inhomogeneous medium},
  VOLUME = {41},
}

@ARTICLE{Dev82,
  AUTHOR = {Devaney, A.},
  DATE = {1982},
  DOI = {10.1016/0161-7346(82)90017-7},
  JOURNALTITLE = {Ultrasonic Imaging},
  NUMBER = {4},
  PAGES = {336--350},
  SHORTJOURNAL = {Ultrason. Imaging},
  TITLE = {A filtered backpropagation algorithm for diffraction tomography},
  VOLUME = {4},
}

@ARTICLE{Dev89b,
  AUTHOR = {Devaney, A. J.},
  DATE = {1989},
  DOI = {10.1088/0266-5611/5/4/006},
  ISSN = {0266-5611},
  JOURNALTITLE = {Inverse Problems},
  NUMBER = {4},
  PAGES = {501--521},
  SHORTJOURNAL = {Inverse Probl.},
  TITLE = {The limited-view problem in diffraction tomography},
  VOLUME = {5},
}

@REPORT{ElbNau26_preprint,
  AUTHOR = {Elbau, P. and Naujoks, N.},
  DATE = {2026},
  TITLE = {Invertibility of the Fourier Diffraction Relation in Raster Scan Diffraction Tomography},
  TYPE = {Preprint on ArXiv},
  NUMBER     = {2602.17344},
  DOI = {10.48550/arXiv.2602.17344},
}

@INPROCEEDINGS{Gre77,
  AUTHOR = {Greenleaf, J. F.},
  BOOKTITLE = {Ultrasonics Symposium Proceedings},
  DATE = {1977},
  PAGES = {989--995},
  TITLE = {Quantitative cross-sectional imaging of ultrasound parameters.},
}

@ARTICLE{GutKli93,
  AUTHOR = {Gutman, S. and Klibanov, M.},
  DATE = {1993},
  DOI = {10.1016/0895-7177(93)90076-b},
  JOURNALTITLE = {Mathematical and Computer Modelling},
  NUMBER = {1},
  PAGES = {5--31},
  SHORTJOURNAL = {Math. Comput. Modelling},
  TITLE = {Regularized Quasi-Newton method for inverse scattering problems},
  VOLUME = {18},
}

@ARTICLE{HalHooSamSchwLop24,
  AUTHOR = {van Hal, V. H. J. and de Hoop, H. and van Sambeek, M. R. H. M. and Schwab, H.-M. and Lopata, R. G. P.},
  DATE = {2024},
  DOI = {10.3389/fphys.2024.1320456},
  JOURNALTITLE = {Frontiers in Physiology},
  SHORTJOURNAL = {Front. Physiol.},
  TITLE = {In vivo bistatic dual-aperture ultrasound imaging and elastography of the abdominal aorta},
  VOLUME = {15},
}

@ARTICLE{Hoh01,
  AUTHOR = {Hohage, T.},
  DATE = {2001},
  DOI = {10.1088/0266-5611/17/6/314},
  ISSN = {0266-5611},
  JOURNALTITLE = {Inverse Problems},
  NUMBER = {6},
  PAGES = {1743--1763},
  SHORTJOURNAL = {Inverse Probl.},
  TITLE = {On the numerical solution of a three-dimensional inverse medium scattering problem},
  VOLUME = {17},
}

@ARTICLE{HohNovSiv24,
  AUTHOR = {Hohage, T. and Novikov, R. G. and Sivkin, V. N.},
  DATE = {2024},
  DOI = {10.1088/1361-6420/ad6fc6},
  ISSN = {0266-5611},
  JOURNALTITLE = {Inverse Problems},
  NUMBER = {10},
  PAGES = {105007},
  SHORTJOURNAL = {Inverse Probl.},
  TITLE = {Phase retrieval and phaseless inverse scattering with background information},
  VOLUME = {40},
}

@BOOK{Hos19,
  EDITOR = {Hoskins, P. R. and Martin, K. and Thrush, A.},
  DATE = {2019},
  DOI = {10.1201/9781138893603},
  TITLE = {Diagnostic Ultrasound},
}

@BOOK{KakSla01,
  AUTHOR = {Kak, A. C. and Slaney, M.},
  LOCATION = {Philadelphia, PA},
  PUBLISHER = {Society for Industrial and Applied Mathematics (SIAM)},
  DATE = {2001},
  DOI = {10.1137/1.9780898719277},
  NOTE = {Reprint of the 1988 original},
  SERIES = {Classics in Applied Mathematics},
  TITLE = {Principles of Computerized Tomographic Imaging},
  VOLUME = {33},
}

@ARTICLE{KirNauSchYan24,
  AUTHOR = {Kirisits, C. and Naujoks, N. and Scherzer, O. and Yang, H.},
  DATE = {2024},
  DOI = {10.1088/1361-6420/ad7d2d},
  ISSN = {0266-5611},
  JOURNALTITLE = {Inverse Problems},
  NUMBER = {11},
  PAGES = {115007},
  SHORTJOURNAL = {Inverse Probl.},
  TITLE = {Diffraction tomography for incident Herglotz waves},
  VOLUME = {40},
}

@ARTICLE{KirQueRitSchSet21,
  AUTHOR = {Kirisits, C. and Quellmalz, M. and Ritsch-Marte, M. and Scherzer, O. and Setterqvist, E. and Steidl, G.},
  DATE = {2021},
  DOI = {10.1088/1361-6420/ac2749},
  ISSN = {0266-5611},
  JOURNALTITLE = {Inverse Problems},
  NUMBER = {11},
  PAGES = {115002},
  SHORTJOURNAL = {Inverse Probl.},
  TITLE = {Fourier reconstruction for diffraction tomography of an object rotated into arbitrary orientations},
  VOLUME = {37},
}

@ARTICLE{KirQueSet25,
  AUTHOR = {Kirisits, C. and Quellmalz, M. and Setterqvist, E.},
  DATE = {2025},
  DOI = {10.1137/24M167370X},
  ISSN = {1936-4954},
  ISSUE = {1},
  JOURNALTITLE = {{SIAM} Journal on Imaging Sciences},
  PAGES = {665--700},
  SHORTJOURNAL = {{SIAM} J. Imaging Sciences},
  TITLE = {Generalized Fourier Diffraction Theorem and Filtered Backpropagation for Tomographic Reconstruction},
  VOLUME = {18},
}

@ARTICLE{KliRom16,
  AUTHOR = {Klibanov, M. V. and Romanov, V. G.},
  DATE = {2016},
  DOI = {10.1137/15m1022367},
  ISSN = {0036-1399},
  JOURNALTITLE = {{SIAM} Journal on Applied Mathematics},
  NUMBER = {1},
  PAGES = {178--196},
  SHORTJOURNAL = {{SIAM} J. Appl. Math.},
  TITLE = {Reconstruction Procedures for Two Inverse Scattering Problems Without the Phase Information},
  VOLUME = {76},
}

@BOOK{Kut91,
  AUTHOR = {Kuttruff, H.},
  LOCATION = {Amsterdam},
  PUBLISHER = {Elsevier Science Publishers Ltd},
  DATE = {1991},
  DOI = {10.1007/978-94-011-3846-8},
  TITLE = {Ultrasonics: Fundamentals and Applications},
}

@ARTICLE{Mor89,
  AUTHOR = {Mora, P.},
  PUBLISHER = {Society of Exploration Geophysicists},
  DATE = {1989},
  DOI = {10.1190/1.1442625},
  JOURNALTITLE = {Geophysics},
  NUMBER = {12},
  PAGES = {1575--1586},
  SHORTJOURNAL = {Geophysics},
  TITLE = {Inversion = migration $+$ tomography},
  VOLUME = {54},
}

@MISC{MueSchuGuc15_report,
  AUTHOR = {M\"{u}ller, P. and Sch\"{u}rmann, M. and Guck, J.},
  URL = {https://arxiv.org/abs/1507.00466v3},
  DATE = {2015},
  EPRINT = {1507.00466},
  TITLE = {The theory of diffraction tomography},
  TYPE = {arxiv},
}

@ARTICLE{NahPanKak84,
  AUTHOR = {Nahamoo, D. and Pan, S. X. and Kak, A. C.},
  DATE = {1984},
  DOI = {10.1109/t-su.1984.31502},
  JOURNALTITLE = {{IEEE} Transactions on Sonics and Ultrasonics},
  NUMBER = {4},
  PAGES = {218--229},
  SHORTJOURNAL = {{IEEE} Trans. Son. Ultrason.},
  TITLE = {Synthetic Aperature Diffraction Tomography and Its Interpolation-Free Computer Implementation},
  VOLUME = {31},
}

@INCOLLECTION{Nat15,
  AUTHOR = {Natterer, F.},
  EDITOR = {Scherzer, O.},
  LOCATION = {New York},
  PUBLISHER = {Springer},
  BOOKTITLE = {Handbook of Mathematical Methods in Imaging},
  DATE = {2015},
  DOI = {10.1007/978-3-642-27795-5_37-2},
  EDITION = {2},
  ISBN = {978-1-4939-0789-2},
  PAGES = {1253--1278},
  TITLE = {Sonic Imaging},
}

@ARTICLE{NeePetVerPeeHaa24,
  AUTHOR = {van Neer, P. L. M. J. and Peters, L. C. J. M. and Verbeek, R. G. F. A. and Peeters, B. and de Haas, G. and H\"{o}rchens, L. and Fillinger, L. and Schrama, T. and Merks-Swolfs, E. J. W. and Gijsbertse, K. and Saris, A. E. C. M. and Mozaffarzadeh, M. and Menssen, J. M. and de Korte, Ch. L. and van der Steen, J.-L. P. J. and Volker, A. W. F. and Gelinck, G. H.},
  DATE = {2024},
  DOI = {10.1038/s41467-024-47074-1},
  JOURNALTITLE = {Nature Communications},
  NUMBER = {1},
  SHORTJOURNAL = {Nat. Commun.},
  TITLE = {Flexible large-area ultrasound arrays for medical applications made using embossed polymer structures},
  VOLUME = {15},
}

@ARTICLE{NorLin79,
  AUTHOR = {Norton, S. J. and Linzer, M.},
  DATE = {1979},
  DOI = {10.1177/016173467900100205},
  JOURNALTITLE = {Ultrasonic Imaging},
  NUMBER = {2},
  PAGES = {154--184},
  SHORTJOURNAL = {Ultrason. Imaging},
  TITLE = {Ultrasonic Reflectivity Tomography: Reconstruction with Circular Transducer Arrays},
  VOLUME = {1},
}

@ARTICLE{Nov15,
  AUTHOR = {Novikov, R. G.},
  DATE = {2015},
  DOI = {10.1016/j.bulsci.2015.04.005},
  JOURNALTITLE = {Bulletin des Sciences Math\'{e}matiques},
  NUMBER = {8},
  PAGES = {923--936},
  SHORTJOURNAL = {Bull. Sci. Math.},
  TITLE = {Formulas for phase recovering from phaseless scattering data at fixed frequency},
  VOLUME = {139},
}

@ARTICLE{Som12,
  AUTHOR = {Sommerfeld, A.},
  DATE = {1912},
  JOURNALTITLE = {Jahresbericht der Deutschen Mathematiker-Vereinigung},
  PAGES = {309--352},
  SHORTJOURNAL = {Jber. Deutsch. Math. Verein.},
  TITLE = {Die Greensche Funktion der Schwingungslgleichung},
  VOLUME = {21},
}

@ARTICLE{Wol69,
  AUTHOR = {Wolf, E.},
  DATE = {1969},
  DOI = {10.1016/0030-4018(69)90052-2},
  JOURNALTITLE = {Optics Communications},
  NUMBER = {4},
  PAGES = {153--156},
  SHORTJOURNAL = {Opt. Commun.},
  TITLE = {Three-dimensional structure determination of semi-transparent objects from holographic data},
  VOLUME = {1},
}

	
\end{document}